\newcommand*\dif{\mathop{}\!\mathrm{d}}  
\newtheorem{theorem}{Theorem}[section] 
\theoremstyle{definition} 
\newtheorem{remark}{Remark}[section] 
\numberwithin{equation}{section} 
\providecommand{\keywords}[1]
{
  \small	
  \textbf{\textit{Keywords: }} #1
}
\providecommand{\MSC}[1]
{
  \small	
  \textit{2020 MSC: } #1   
}
\title{Random motions in $\mathbb{R}^3$ with orthogonal directions}
\author{Fabrizio Cinque$^1$ and Enzo Orsingher$^2$\\
        \small Department of Statistical Sciences, Sapienza University of Rome, Italy \\
        \small $^1$fabrizio.cinque@uniroma1.it $^2$enzo.orsingher@uniroma1.it
}
\begin{document}

\maketitle

\begin{abstract}
This paper is devoted to the detailed analysis of three-dimensional motions in $\mathbb{R}^3$ with orthogonal directions switching at Poisson times and moving with constant speed $c>0$. The study of the random position at an arbitrary time $t>0$ on the surface of the support, forming an octahedron $S_{ct}$, is completely carried out on the edges $E_{ct}$ and faces $F_{ct}$. In particular, the motion on the faces $F_{ct}$ is analysed by means of a transformation which reduces it to a three-directions planar random motion. This permits us to obtain an integral representation on $F_{ct}$ in terms of integral of products of  first order Bessel functions. The investigation of the distribution of the position $p=p(t,x,y,z)$ inside $S_{ct}$ implied the derivation of a sixth-order partial differential equation governing $p$ (expressed in terms of the products of three D'Alembert operators). A number of results, also in explicit form, concern the time spent on each direction and the position reached by each coordinates as the motion devolpes. The analysis is carried out when the incoming direction is orthogonal to the ongoing one and also when all directions can be uniformely choosen at each Poisson event. If the switches are governed by homogeneus Poisson process many explicit results are obtained.
\end{abstract} \hspace{10pt}

\keywords{Random motions in higher spaces; Partial Differential Equations; Telegraph process; Bessel Functions}

\MSC{Primary 60K99; 60G50}

\section{Introduction}

In the last decades, several papers focused on random motions with finite velocity both on the line and in multi-dimensional spaces and even in non-Euclidean spaces appeared. This is a wide class of stochastic processes that preserve the natural property of moving with finite speed along each direction. The prototype of these motions is the one-dimensional telegraph process which firstly appeared as a time continuous extension of simple and correlated random walks, see Goldstein \cite{G1951}. We can find some early reference about planar motions in Pearson \cite{P1906}.
\\Later, the analysis of random motions in $\mathbb{R}^2$ has been performed by many authors over the years. The case of minimal number of directions in $\mathbb{R}^2$ (meaning with three directions) was explored by Orsingher \cite{O2002} and Di Crescenzo \cite{Dc2002} (with arbitrary random steps between successive switches). First general results in $\mathbb{R}^d$ appeared in Samoilenko \cite{S2001} and further studies led to explicit results concerning minimal cyclic random motions, presented by Leorato \textit{et Al.} \cite{LLO2006} and Lachal \cite{L2006}, who provided also an integral formula for the transition density in the non-minimal case.

For planar motions, the $m$-th general equation governing the position of the motion with $m\ge 3$ arbitrary directions was obtained by Kolesnik and Turbin \cite{KT1998}. Note that in the relationship between finite-velocity random motions and partial differential equations, in the one-dimensional case, is known since the first papers and it has been strongly used to derive several results, see for instance Brooks \cite{B1999} and Orsingher \cite{O1990}. However, extracting probabilistic information by huge hyperbolic differential equations of higher order is extremely difficult. For this reasons, in the plane, two opposite assumptions proved to be fruitful, that is the case of an infinite number of directions (uniformly distributed) and the case of orthogonal directions. The first case was considered by Grosjean \cite{G1953}, Stadje \cite{S1987} and Kolesnik and Orsingher \cite{KO2005}. Later, random motions taking velocities uniformly in the continuum spectrum of possible directions where evaluated in higher spaces, see Orsingher and De Gregorio \cite{ODg2007}, Pogorui \cite{P2012} and Kolesnik \cite{K2018}.
\\The case of orthogonal directions in $\mathbb{R}^2$  was considered by Orsingher and Kolesnik \cite{OK1996}, Orsingher \cite{O2000} and recently by Orsingher \textit{et Al.} \cite{OGZ2020} in the case of cyclic movements, also in higher spaces, and in also non-homogeneous case by Cinque and Orsingher \cite{CO2021b}.

We would like to mention that in the last decade, the study of random motions with finite velocity is spreading out in the physical literature as well. We want to recall the papers of Paoluzzi \textit{et al.} \cite{PDlA2014}, Santra \textit{et al.} \cite{SBS2020}, Sevilla \cite{S2020} and Hartmann \textit{et al.} \cite{HMSS2020} concerning motion on the plane and Mori \textit{et al.} \cite{MLdMS2020} about motion in higher order spaces.

At last, other remarkable works concern fractional versions of random motions, invastigated and recently reviewed by Masoliver and Lindenberg \cite{ML2020}, Masoliver \cite{M2021} and Shitikova \cite{S2022}.
\\

Note that the classification of orthogonal planar random motion is similar to that of motions in $\mathbb{R}^3$, i.e. an orthogonal standard motion describes the position of a particle that can uniformly choose only the directions orthogonal to the current one, an orthogonal completely uniform motion describes the movement of a particle that chooses the new direction uniformly among all the possible ones (including the previous one).
\\

The object of this paper are random motions with orthogonal directions in $\mathbb{R}^3$. We study a stochastic vector process  $(X,Y,Z) = \big\{\bigl(X(t),Y(t),Z(t)\bigr)\big\}_{t\ge0}$  describing the position of a particle placed at $(x,y,z) = (0,0,0)$ at time $t=0$ and which uniformly chooses one of the six possible directions, $d_0 = (1,0,0),\ d_1= (0,1,0),\ d_2 = (0,0,1),\ d_3=(-1,0,0),\ d_4 = (0,-1,0),\ d_5 = (0,0,-1)$. The particle moves with constant velocity $c>0$ and its switches of direction are timed by a Poisson process with rate function $\lambda:(0,+\infty)\longrightarrow (0,+\infty)$. Here we consider the cases where
\begin{itemize}
\item[(1)] Orthogonal Standard Motion (OSM): from the direction $d_j$, the particle can uniformly choose among the four directions orthogonal to $d_j$ (i.e. those lying on the plane orthogonal to $d_j$), for instance if the particle moves with $d_1$ ($y$-axis), then it can choose among $d_0,d_2,d_3$ and $d_5$ (those lying on the $x,z$-plane).

\item[(2)] Orthogonal Uniform Motion (OUM): from the direction $d_j$, the particle can uniformly choose among all possible directions,including $d_j$ and its reflected direction.
\end{itemize}

The above classification follows the criterion given in \cite{CO2021b} concerning orthogonal random motion on the plane, i.e. \textit{standard} motion if the particle switches to one of the two orthogonal directions, \textit{uniform} motion if the particle uniformly switches to one of the possible directions.

Note that, in light of Remark 3.4 of \cite{CO2021b}, the orthogonal completely uniform motion is  probabilistically related to a motion where, from direction $d_j$, the particle can uniformly choose among all possible directions except $d_j$ itself. We call this Orthogonal Symmetrically Deviating Motion (OSDM). In detail, an OSDM with rate function $\lambda$ is equal in distribution to an OUM with rate function $6\lambda/5$.
\\

At time $t>0$, the support of particle position is located inside the set
\begin{equation}\label{ottaedro}
S_{ct}= \{(x,y,z)\in \mathbb{R}^3\,:,\ |x|+|y|+|z| \le ct\}.
\end{equation}
which represents an octahedron centered in the origin and with vertices placed on the coordinate axes, see Figure \ref{figuraOttaedro}.

\begin{figure}[h]
\includegraphics[scale = 0.5]{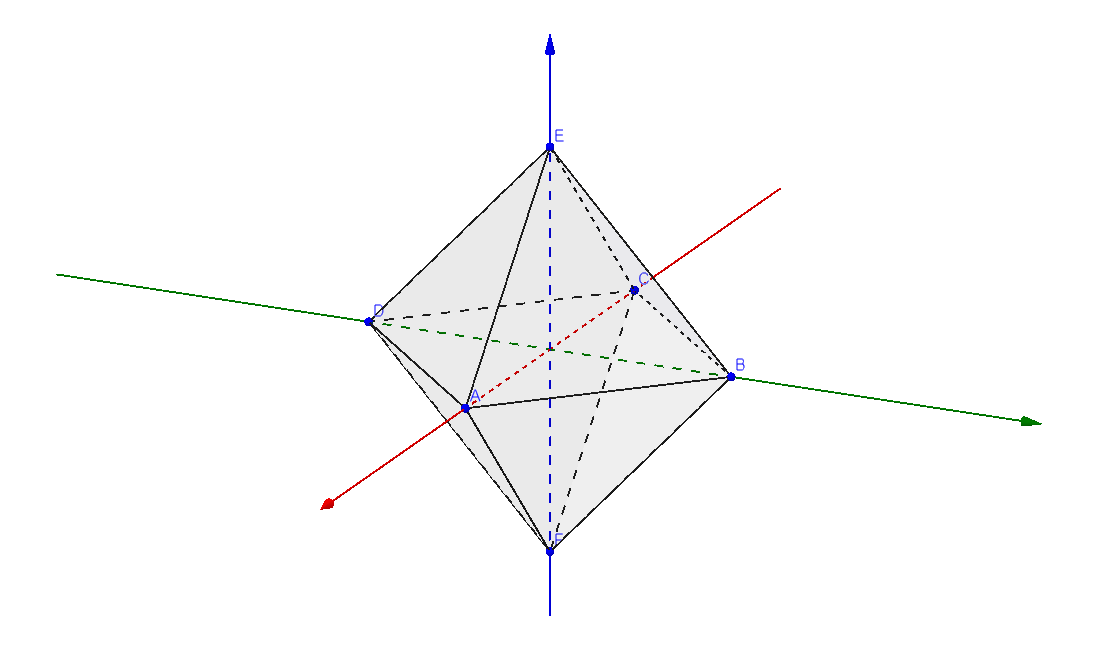}
	\caption{\small Support of orthogonal random motions in $\mathbb{R}^3$.}\label{figuraOttaedro}
\end{figure}

Our work focuses on the study of the distribution of the position $\bigl(X(t),Y(t),Z(t)\bigr)$ inside the octahedron $S_{ct}$ as well as on its surface, where the rate function is such that $\Lambda(t) = \int_0^t \lambda(s)\dif s<\infty, \ \ t>0$. If $\Lambda(t)=\infty$ the distribution of the position of the moving particle is only inside $S_{ct}$.

We show that we can distinguish three main singularities, the vertices, the edges and the faces of the polyhedron. We prove that over the edges the distribution of the process reduces to the product of a specific probability mass and the distribution of a one-dimensional symmetric random motion with two velocities, which coincides with the well-known telegraph process in the case of a homogeneous Poisson process. This result can be better understood by also proving that the probability distribution, given that the particle has never left one of the three possible Cartesian planes, coincides with the distribution of an orthogonal planar random motion (whose version depends on the version of the three-dimensional motion) studied in detailed in \cite{CO2021b}.

We also investigate the probability over the faces of the octahedron. In this case we show that its distribution is related to a planar random motion with three directions. In particular, thanks to previous works on planar motions, in particular \cite{LO2004}, we are able to provide an explicit probability density in the case of a constant rate function, for both OSM and OUM.

In the fourth section we study the marginal and joint distribution of the random times that the particle spends moving along each axis. We show that these processes are strongly related to particular cases of  two and one-dimensional random motions with finite speed and we present the explicit distribution in the case of $\lambda(t) = \lambda, \  t>0$.

In the last section is devote to the analysis of the absolutely continuous component of the motion, in particular we provide the governing equation and an integral representations for its transition probability density.

\section{Preliminaries on planar random motions}\label{sezioneMotoPianoTreDirezioni}

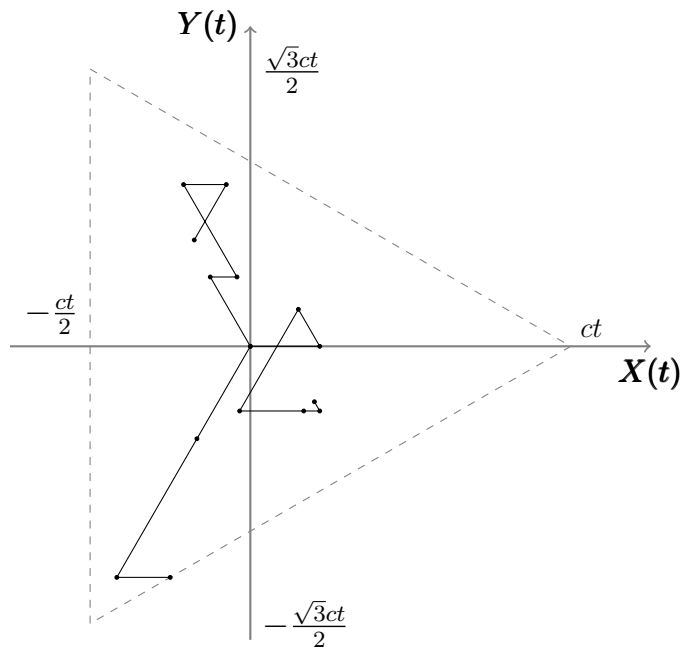
\begin{figure}
		\centering
		\begin{tikzpicture}[scale = 0.71]
		\draw[dashed, gray] (-3,5.196) -- (6,0) node[above right, black, scale = 0.9]{$ct$};
		\draw[dashed, gray] (6,0) -- (-3,-5.196) node[above left, black, scale = 0.9]{};
		\draw[dashed, gray] (-3,-5.196) -- (-3,5.196) node[below left, black, scale = 0.9]{};
		\draw (-3,0) node[above left, scale =1.1]{$-\frac{ct}{2}$};
		\draw (0,5.196) node[right, scale =1.1]{$\frac{\sqrt{3}ct}{2}$};
		\draw (0,-5.196) node[right, scale =1.1]{$-\frac{\sqrt{3}ct}{2}$};
		\draw[->, thick, gray] (-4.5,0) -- (7.5,0) node[below, scale = 1, black]{$\pmb{X(t)}$};
		\draw[->, thick, gray] (0,-5.5) -- (0,6) node[left, scale = 1, black]{ $\pmb{Y(t)}$};
		\draw (0,0)--(1.3,0)--(0.9,0.693)--(-0.2,-1.212)--(1, -1.212)--(1.3,-1.212)--(1.2,-1.0388);
		\filldraw (0,0) circle (1pt); \filldraw (1.3,0) circle (1pt); \filldraw (0.9,0.693)circle (1pt); \filldraw (-0.2,-1.212) circle (1pt); \filldraw (1,-1.212) circle (1pt); \filldraw (1.3,-1.212) circle (1pt); \filldraw (1.2,-1.0388) circle (1pt);
		\draw (0,0)--(-1,-1.732)--(-2.5,-4.33)--(-1.5, -4.33);
		\filldraw (0,0) circle (1pt); \filldraw (-1,-1.732) circle (1pt); \filldraw(-2.5,-4.33) circle (1pt); \filldraw(-1.5, -4.33) circle (1pt);
		\draw (0,0)--(-0.75,1.299)--(-0.25,1.299)--(-1.25,3.031)--(-0.45,3.031)--(-1.05,1.9918);
		\filldraw(-0.75,1.299) circle (1pt); \filldraw (-0.25,1.299) circle (1pt); \filldraw(-1.25,3.031) circle(1pt); \filldraw(-0.45,3.031)circle (1pt); \filldraw (-1.05,1.9918) circle(1pt);
		\end{tikzpicture}
		\caption{\small Sample paths of a minimal uniform random motion in $\mathbb{R}^2$ with directions (\ref{velocitaMotoPianoLO}).}\label{motoTreDirezioni}
\end{figure}

We start with some information on planar random motions with directions
\begin{equation}\label{velocitaMotoPianoLO}
v_0 = (c,0),v_1 = (-c/2,\sqrt{3}c/2),v_2 = (-c/2,-\sqrt{3}c/2) 
\end{equation}
or equivalently $v_i = (c\,\cos\frac{2i\pi}{3},c\,\sin\frac{2i\pi}{3}),\  i=0,1,2.$ At time $t>0$, a particle moving with directions (\ref{velocitaMotoPianoLO}) is located in the triangle $T_{ct} =\{(x,y)\in\mathbb{R}^2\,:\,-ct/2\le x\le ct, (x-ct)/\sqrt{3}\le y\le (ct-x)/\sqrt{3} \}$ with probability one. Among the earliest works concerning these kind of motions we cite the papers by Di Crescenzo \cite{Dc2002} and Orsingher \cite{O2002}. The paper by Leorato and Orsingher \cite{LO2004} provides a complete analysis of this planar motion and the authors obtain the explicit form of the transition density of the process, by means of order statistics, in the case of a completely uniform motion, i.e. at every Poisson event the new direction is uniformly chosen among the three possible ones (see formula (2.10) of \cite{LO2004}). 
\\

Let $\big\{\bigl(X(t),Y(t)\bigl)\big\}_{t\ge0}$ be a completely uniform planar random motion with directions $v_0,v_1,v_2$ whose changes of direction are governed by a homogeneous Poisson process with constant rate $\lambda>0$, then for $(x,y)\in \mathring{T_{ct}}$ (i.e. the absolutely continuous component of the distribution) we have that
\begin{align}
&P\{X(t)\in \dif x,Y(t)\in \dif y\}= \frac{2e^{-\lambda t}}{\sqrt{3}}\dif x\dif y \label{distribuzioneMotoPianoUniformeLO}\\
&\times\sum_{n_0=0}^\infty\sum_{n_1=0}^\infty\sum_{n_2 = 0}^\infty\Bigl(\frac{\lambda}{c}\Bigr)^{n_0+n_1+n_2+2}\frac{(n_0+n_1+n_2+3)!}{3^{2(n_0+n_1+n_2)+4}} \frac{(ct-2x)^{n_0}(ct-x+\sqrt{3}y)^{n_1}(ct-x-\sqrt{3}y)^{n_2}}{n_0!(n_0+1)!\,n_1!(n_1+1)!\,n_2!(n_2+1)!}\nonumber
\end{align}
where we have suitably reordered the sums of formula (2.10) of \cite{LO2004} in the following way
\begin{align*}
\sum_{k=2}^\infty\sum_{n_0=1}^{k-1}\sum_{n_1=1}^{k-n_0} a_{k,n_0,n_1}&= \sum_{n_0=1}^\infty\sum_{k=n_0+1}^\infty\sum_{n_1=1}^{k-n_0} a_{k,n_0,n_1}= \sum_{n_0=1}^\infty\sum_{h=1}^{\infty}\sum_{n_1=1}^{h}  a_{h+n_0,n_0,n_1}\\
&=\sum_{n_0=1}^\infty\sum_{n_1=1}^\infty\sum_{h=n_1}^{\infty} a_{h+n_0,n_0,n_1}=\sum_{n_0=1}^\infty\sum_{n_1=1}^\infty\sum_{m_2 = 0}^\infty a_{m_2+n_0+n_1,n_0,n_1}\\
& = \sum_{m_0=0}^\infty\sum_{m_1=0}^\infty\sum_{m_2=0}^{\infty} a_{m_0+m_1+m_2+2,m_0+1,m_1+1}\ .
\end{align*}

We establish an integral representation for the distribution (\ref{distribuzioneMotoPianoUniformeLO}). Let $z_0 = ct+2x,\ z_1 = ct-x+\sqrt{3}y,\ z_2 = ct-x-\sqrt{3}y$, then
\begin{align}
P\{&X(t)\in \dif x,Y(t)\in \dif y\}/(\dif x\dif y) \nonumber\\
&= \frac{2e^{-\lambda t}}{\sqrt{3}}\sum_{n_0=0}^\infty\sum_{n_1=0}^\infty\sum_{n_2 = 0}^\infty\Bigl(\frac{\lambda}{c}\Bigr)^{n_0+n_1+n_2+2}\frac{z_0^{n_0}z_1^{n_1}z_2^{n_2}}{3^{2(n_0+n_1+n_2+2)}} \frac{(n_0+n_1+n_2+3)!}{n_0!(n_0+1)!\,n_1!(n_1+1)!\,n_2!(n_2+1)!}\nonumber \\
& = \frac{2e^{-\lambda t}}{\sqrt{3}}\Bigl(\frac{\lambda}{3^2c}\Bigr)^2\sum_{n_0=0}^\infty \Bigl(\frac{\lambda z_0}{3^2c}\Bigr)^{n_0}\frac{1}{n_0!(n_0+1)!} \sum_{n_1=0}^\infty\Bigl(\frac{\lambda z_1}{3^2c}\Bigr)^{n_1}\frac{1}{n_1!(n_1+1)!} \sum_{n_2 = 0}^\infty\Bigl(\frac{\lambda z_2}{3^2c}\Bigr)^{n_2}\frac{1}{n_2!(n_2+1)!} \nonumber \\
&\ \ \ \times\int_0^\infty e^{-w}w^{n_0+n_1+n_2+3}\dif w \nonumber \\
& = \frac{2e^{-\lambda t}}{\sqrt{3}} \int_0^\infty e^{-w}w^{\frac{3}{2}}\,I_1\Bigl(\frac{2}{3}\sqrt{\frac{\lambda z_0\, w}{c}}\Bigr) \, I_1\Bigl(\frac{2}{3}\sqrt{\frac{\lambda z_1\,w}{c}}\Bigr)\,I_1\Bigl(\frac{2}{3}\sqrt{\frac{\lambda z_2\,w}{c}}\Bigr) \,\frac{1}{3}\sqrt{\frac{\lambda}{c}}\frac{1}{\sqrt{z_0z_1z_2}}\dif w \nonumber\\
& = \frac{4e^{-\lambda t}}{3\sqrt{3}} \sqrt{\frac{\lambda}{c}}\frac{1}{\sqrt{z_0z_1z_2}} \int_0^\infty e^{-u^2}u^{4} \,I_1\Bigl(\frac{2u}{3}\sqrt{\frac{\lambda z_0}{c}}\Bigr) \, I_1\Bigl(\frac{2u}{3}\sqrt{\frac{\lambda z_1}{c}}\Bigr)\,I_1\Bigl(\frac{2u}{3}\sqrt{\frac{\lambda z_2}{c}}\Bigr)\dif u, \label{distribuzioneIntegraleMotoPianoUniformeLO}
\end{align}
where $I_\nu(x) =\sum_{k=0}^\infty \bigl(\frac{x}{2}\bigr)^{2k+\nu} \frac{1}{k!\Gamma(k+1+\nu)}$ is the modified Bessel function of order $\nu \in \mathbb{R}$, with $x\in \mathbb{R}$.

\begin{remark}[Non-homogeneous Poisson process]
Note that for a completely uniform planar motion with velocities $v_0,v_1,v_2$ whose changes of direction are governed by a non-homogeneous Poisson process with rate function $\lambda\in C^2\bigl((0,\infty),(0,\infty)\bigl)$ such that $\Lambda(t)=\int_0^t\lambda(s)\dif s<\infty, \ t>0$, the transition density can be written as $q(t,x,y) = q_0(t,x,y)+ q_1(t,x,y)+ q_2(t,x,y)$, where $q_i(t,x,y)\dif x\dif y=P\{X(t)\in \dif x,Y(t)\in \dif y,D(t) = v_i\},$ $i=0,1,2,$ and which satisfy the following differential system
\begin{equation}\label{sistemaDifferenzialeMotoPianoUniformeLO}
\begin{cases}
\frac{\partial q_0}{\partial t} = -c\frac{\partial q_0}{\partial x}+\frac{\lambda(t)}{3}(q_1+q_2-2q_0),\\
\frac{\partial q_1}{\partial t} = \frac{c}{2}\frac{\partial q_1}{\partial x}-\frac{\sqrt{3}c}{2}\frac{\partial q_1}{\partial y} +\frac{\lambda(t)}{3}(q_0+q_2-2q_1),\\
\frac{\partial q_2}{\partial t} =\frac{c}{2}\frac{\partial q_1}{\partial x}+\frac{\sqrt{3}c}{2}\frac{\partial q_1}{\partial y} +\frac{\lambda(t)}{3}(q_0+q_1- 2q_2),
\end{cases}
\end{equation}
subject to the conditions $q_i\ge0$ and $\int\int_{Support\bigl(X(t),Y(t)\bigr)} q(t,x,y)\dif x\dif y =\Bigl(1-e^{-\frac{\Lambda(t)}{3}}\Bigr)^2.$
\\Clearly, if $\Lambda(t) = \infty\ \forall\ t$, the only change required is that the second member of the second boundary condition is equal to $1\ \forall\ t$.
\hfill$\diamond$
\end{remark}

\begin{remark}[Symmetrically deviating motion]\label{motoPianoTreDirezioniSD}
From the results concerning the uniform motion, we immediately obtain a complete picture of the symmetrically deviating version of the motion, i.e. when at every Poisson event the new direction is chosen uniformly among the possible directions excluding the current one. In fact, as explained in Remark 3.4 of \cite{CO2021b}, the symmetrically deviating version with rate function $\lambda$ is equal in distribution to the uniform version with rate function $3\lambda /2$ (the system corresponding to (\ref{sistemaDifferenzialeMotoPianoUniformeLO}) is exactly the same with $\lambda/2$ replacing $\lambda/3$ and the boundary conditions are also suitably modified).\hfill$\diamond$
\end{remark}

\begin{remark}[One-dimensional telegraph process]
In the same spirit of the above calculations leading to formula (\ref{distribuzioneIntegraleMotoPianoUniformeLO}) we can give an alternative form for the distribution of the absolutely continuous component of the one-dimensional symmetric telegraph process $\{\mathcal{T}(t)\}_{t\ge0}$. We recall that, for $|x|<ct$,
\begin{equation}\label{distribuzioneTelegrafoSimmetrico}
P\{\mathcal{T}(t)\in \dif x\}/\dif x = \frac{e^{-\lambda t}}{2c}\Biggl[\lambda I_0\Bigl(\frac{\lambda}{c}\sqrt{c^2t^2-x^2}\Bigr)+\frac{\partial}{\partial t}I_0\Bigl(\frac{\lambda}{c}\sqrt{c^2t^2-x^2}\Bigr) \Biggr].
\end{equation}

We show that (\ref{distribuzioneTelegrafoSimmetrico}) can be written in the following alternative ways
\begin{align}
\frac{e^{-2\lambda t}}{2}&\sum_{m=0}^\infty\sum_{n=0}^\infty\Bigl(\frac{2\lambda}{c}\Bigr)^{m+n+1}\frac{(ct-x)^{m}(ct+x)^{n}}{2^{2(m+n+1)}} \frac{(m+n+2)!}{m!(m+1)!\,n!(n+1)!} \label{distribuzioneTelgrafoSerie}\\
& =  \frac{e^{-2\lambda t}}{\sqrt{(c^2t^2-x^2)}}  \int_0^\infty e^{-w^2}w^{3} \, I_1\Bigl(w\sqrt{\frac{2\lambda}{c}(ct-x)}\Bigr)\,I_1\Bigl(w\sqrt{\frac{2\lambda}{c}(ct+x)}\Bigr)\dif w. \label{distribuzioneTelegrafoIntegrale}
\end{align}
The above equality follows by proceeding in the same way as in the derivation of formula (\ref{distribuzioneIntegraleMotoPianoUniformeLO}).
\\
\\
In order to prove the equality between (\ref{distribuzioneTelegrafoSimmetrico}) and (\ref{distribuzioneTelegrafoIntegrale}), we recall that for $\nu,w,A\in\mathbb{R},\ I_\nu(Aw)$ satisfies
\begin{equation}\label{equazioneDiBessel}
\frac{\dif^2 f}{\dif w^2}+\frac{1}{w}\frac{\dif f}{\dif w}-\Bigl(A^2+\frac{\nu^2}{w^2}\Bigr)f = 0\ \ \text{and} \ \ \frac{\dif }{\dif w}I_0(Aw) = AI_1(Aw).
\end{equation}
We now consider the following relationship, with $A,B$ being arbitrary real numbers,
\begin{align}
\int_0^\infty &w^{3}e^{-w^2} \, I_1(Aw)\,I_1(Bw)\dif w \\
& = \frac{1}{AB}\int_0^\infty e^{-w^2}w^{3} \,\frac{\partial }{\partial w} I_0(Aw)\,\frac{\partial }{\partial w}I_0(Bw)\dif w \nonumber\\
& = \frac{1}{AB} \Bigl[-\frac{w^2e^{-w^2}}{2} \frac{\partial }{\partial w}I_0(Aw)\frac{\partial }{\partial w}I_0(Bw)\Bigr]_0^\infty + \frac{1}{2AB} \int_0^\infty e^{-w^2} \Biggl(2w \frac{\partial }{\partial w}I_0(Aw)\frac{\partial }{\partial w}I_0(Bw)\nonumber\\
&\ \ \ +w^2 \frac{\partial^2 }{\partial w^2}I_0(Aw)\frac{\partial }{\partial w}I_0(Bw) + w^2\frac{\partial }{\partial w}I_0(Aw)\frac{\partial^2 }{\partial w^2}I_0(Bw) \Biggr)\dif w\nonumber\\
& = \frac{1}{2AB}\int_0^\infty w^2e^{-w^2} \Bigl[ A^2 I_0(Aw)\,\frac{\partial }{\partial w}I_0(Bw) + B^2 \frac{\partial }{\partial w}I_0(Aw)\,I_0(Bw) \Bigr]\dif w \nonumber\\
& =\frac{A^2+B^2}{4} \int_0^\infty we^{-w^2}I_1(Aw)I_1(Bw)\dif w + \frac{AB}{2}\int_0^\infty we^{-w^2}I_0(Aw)I_0(Bw)\dif w\label{ultimoPassaggioIntegralePerTelegrafoUniforme}
\end{align}
where in the second-last equality we used the Bessel equation in (\ref{equazioneDiBessel}) and in the last equality we integrated by parts, used again (\ref{equazioneDiBessel}) and performed some simple algebra.
\\By means of formula (6.633) of Gradshteyn-Ryzhik \cite{GR1980} we obtain that, for $\nu,A,B\in \mathbb{R}, \alpha>0$,
\begin{equation}
\int_0^\infty we^{-\alpha w^2}I_\nu(Aw)I_\nu(Bw)\dif w= \frac{e^{\frac{A^2+B^2}{4\alpha}}}{2\alpha}I_\nu\Bigl(\frac{AB}{2\alpha}\Bigr).
\end{equation}
Thus, formula (\ref{ultimoPassaggioIntegralePerTelegrafoUniforme}) becomes
\begin{equation}\label{integraleConProdottoI1}
\int_0^\infty w^{3}e^{-w^2} \, I_1(Aw)\,I_1(Bw)\dif w=\frac{e^{\frac{A^2+B^2}{4}}}{4}\Biggl[\frac{A^2+B^2}{2} I_1\Bigl(\frac{AB}{2}\Bigr) + AB\, I_0\Bigl(\frac{AB}{2}\Bigr)\Biggr].
\end{equation}
By suitably applying (\ref{integraleConProdottoI1}) to (\ref{distribuzioneTelegrafoIntegrale}) and by using the second relationship in (\ref{equazioneDiBessel}) once again, we obtain (\ref{distribuzioneTelegrafoSimmetrico}).
\\

It is well-known that a uniform motion with constant rate $\lambda$ is equal in distribution to a classical motion with constant rate $\lambda/2$ (this is also a particular case of the result stated in Remark 3.4 of \cite{CO2021b}). Now, formula (\ref{distribuzioneTelgrafoSerie}) can be obtained by studying a completely uniform telegraph process $\{\mathcal{T}(t)\}_{t\ge0}$, i.e. when at every change of direction the particle can either switch or continue with the same velocity. In this case we can write, with $|x|<ct$,
\begin{align} 
P&\{\mathcal{T}(t)\in \dif x\}\label{distribuzioneTelegrafoUniforme}\\
&= \sum_{k=1}^\infty\sum_{m=1}^k P\{N(t) = k\} P\{N_+(t) = m\,|\,N(t)=k\}P\{\mathcal{T}(t)\in\dif x\,|\,N_+(t) = m,N(t) = k\}\nonumber
\end{align}
where $N_+(t)$ denotes the number of positive displacements performed in the time interval $[0,t]$. The middle factor is the probability mass of a $Binomial(k+1,1/2)$ random variable. The third probability can be computed by means of several strategies, see \cite{LO2004} for an equivalent two dimensional case, and for $|x|<ct,\ 1\le m\le k,$ it reads
\begin{equation}
P\{\mathcal{T}(t)\in\dif x\,|\,N_+(t) = m,N(t) = k\} = \frac{k!}{(m-1)!(k-m)!}\frac{(ct-x)^{m-1}(ct+x)^{k-m}}{(2ct)^k}\dif x.
\end{equation}
The probability (\ref{distribuzioneTelegrafoUniforme}), with rate $2\lambda$, easily leads to (\ref{distribuzioneTelgrafoSerie}).\hfill$\diamond$
\end{remark}

\section{Orthogonal motions in $\mathbb{R}^3$: distributions on the singular components}

Let $(X,Y,Z) =\big\{\bigl(X(t),Y(t),Z(t)\bigr)\big\}_{t\ge0}$ be an orthogonal random motion in $\mathbb{R}^3$. We have already explained that at time $t>0$ the position of the particle, described by the stochastic vector process $\bigl(X(t),Y(t),Z(t)\bigr)$, is in the set $S_{ct}$, given in formula (\ref{ottaedro}), see Figure \ref{figuraOttaedro}. 
\\In the case of a rate function $\lambda:(0,+\infty)\longrightarrow (0,+\infty)$ such that $\Lambda(t)=\int_0^\infty \lambda(s)\dif s<\infty, t>0$, we must distinguish among three main singularities which compose the surface $\partial S_{ct}$ of the octahedron $S_{ct}$:
\begin{itemize}
\item the (six) vertices, $V_{ct} = \{(\pm ct,0,0), (0,\pm ct, 0),(0,0,\pm ct)\}$. The particle lies in $V_{ct}$ if it takes only one direction in the time interval $[0,t]$;
\item the (twelve) edges, $\mathring E_{ct}=E_{ct}\setminus V_{ct} = \{(x,y,z)\in \mathbb{R}^3\,:\, |x|+|z|=ct \ \text{or}\ |x|+|y|=ct\ \text{or}\ |y|+|z| = ct\} \setminus V_{ct}$. The particle lies in $\mathring E_{ct}$ if it takes only two directions in the time interval $[0,t]$;
\item the (eight) faces, $\mathring F_{ct} =F_{ct}\setminus E_{ct} = \{(x,y,z)\in \mathbb{R}^3\,:\, |x|+|y|+|z|=ct\}\setminus E_{ct}$. The particle lies in $\mathring F_{ct}$ if it takes only three directions in the time interval $[0,t]$.
\end{itemize}
Note that $F_{ct} = \partial S_{ct}$. 
\\

Below we list the probability masses of all these singularities for both the orthogonal standard motion (OSM) and the orthogonal uniform motion (OUM).

Let $\big\{\bigl(X(t),Y(t),Z(t)\bigr)\big\}_{t\ge0}$ be an OSM with rate function $\lambda$ such that $\Lambda(t)=\int_0^\infty \lambda(s)\dif s <\infty\ \forall\ t$. We have that
\begin{align}
&P\big\{ \bigl(X(t),Y(t),Z(t)\bigr) \in V_{ct} \big\} = P\{N(t)=0\} = e^{-\Lambda(t)},\label{probabilitaVerticiOSM}\\
&P\big\{\bigl(X(t),Y(t),Z(t)\bigr)\in \mathring E_{ct}\big\} = \sum_{n=1}^\infty P\{N(t)=n\}\Bigl(\frac{1}{4}\Bigr)^{n-1} = 4\Bigl( e^{-\frac{3\Lambda(t)}{4}} - e^{-\Lambda(t)} \Bigr),\label{probabilitaSpigoliOSM}\\
&P\big\{\bigl(X(t),Y(t),Z(t)\bigr)\in \mathring F_{ct} \big\} = \sum_{n=2}^\infty P\{N(t)=n\} \sum_{k=0}^{n-2}\Bigl(\frac{1}{4}\Bigr)^{k}\, \frac{2}{4}\, \Bigl(\frac{2}{4}\Bigr)^{n-k-2} = 4\Bigl( e^{-\frac{\Lambda(t)}{2}} - e^{-\frac{\Lambda(t)}{4}}\Bigr)^2.\label{probabilitaFacceOSM}
\end{align}
The probability (\ref{probabilitaFacceOSM}) requires some considerations. The explicit formula takes into account that the particle must move with three different directions only. In the first two displacements two different directions are selected. Then, the particle keeps on alternating these two directions $k$ times and then it changes into one of the two possible directions concerning the ``third dimension''. It continues alternating these three selected directions for the remaining switches, meaning that each time it can choose between two directions out of four.
\\

Let $\big\{\bigl(X(t),Y(t),Z(t)\bigr)\big\}_{t\ge0}$ be an OUM with rate function $\lambda$ as above.
\begin{align}
P\big\{ \bigl(X(t),Y(t),Z(t)\bigr) \in V_{ct} \big\} &=6\sum_{n=0}^\infty P\{N(t)=n\}\Bigl(\frac{1}{6}\Bigr)^{n+1} = e^{-\frac{5\Lambda(t)}{6}}, \label{probabilitaVerticiOCUM}\\ 
P\big\{\bigl(X(t),Y(t),Z(t)\bigr)\in \mathring E_{ct}\big\} &= 12 \sum_{n=1}^\infty P\{N(t)=n\}\sum_{k=1}^{n}\binom{n+1}{k} \Bigl(\frac{1}{6}\Bigr)^k\Bigl(\frac{1}{6}\Bigr)^{n+1-k}\nonumber\\
& = 4\Bigl( e^{-\frac{2\Lambda(t)}{3}} - e^{-\frac{5\Lambda(t)}{6}} \Bigr), \label{probabilitaSpigoliOCUM}\\
P\big\{\bigl(X(t),Y(t),Z(t)\bigr)\in \mathring F_{ct} \big\} &= 8 \sum_{n=2}^\infty P\{N(t)=n\}\sum_{\substack{k_1,k_2,k_3=1,\\ k_1+k_2+k_3= n+1}}^{n-1}\binom{n+1}{k_1,k_2,k_3} \Bigl(\frac{1}{6}\Bigr)^{k_1}\Bigl(\frac{1}{6}\Bigr)^{k_2}\Bigl(\frac{1}{6}\Bigr)^{k_3}\nonumber\\
&= 4\Bigl(e^{-\frac{\Lambda(t)}{2}} -2 e^{-\frac{2\Lambda(t)}{3}}+ e^{-\frac{5\Lambda(t)}{6}}\Bigr).\label{probabilitaFacceOCUM}
\end{align}
All these probabilities are computed by multiplying the probability of reaching a precise singularity (vertex, edge, face) and their number (6 vertices, 12 edges, 8 faces).

\subsection{Distribution on the edges of the support}

The particle reaches the edge if it chooses only two directions up to time $t$. For example, if we consider the directions $d_0$ and $d_1$ only, the particle is located on the edge
\begin{equation}\label{spigoloOttaedro}
E_{ct}^{(z)} = \big\{(x,y,z)\in\mathbb{R}^3\,:\,x+y=ct,\, z=0\big\}.
\end{equation}
Here, we study the distribution of the particle lying on the edge $E_{ct}^{(z)}$.

The choice of a precise edge is non-restrictive because of symmetry properties of both the OSM and the OUM. In particular, it is important to observe that the probability (density) of being at time $t>0$ in a point $(x,y,z)\in S_{ct}$ is equal to the probability (density) of being at time $t$ in any point of kind $(\pm x, \pm y,\pm z)$. This symmetry property permits us to focus our analysis, here and in the forthcoming sections, on the portion of the support concerning non-negative coordinates, that is $S_{ct}\cap \{(x,y,z\in\mathbb{R}^3\,:\,x,y,z\ge0)\}$.
\\

We want to evaluate the probability density in an arbitrary point of $\mathring E_{ct}^{(z)}$, that is $P\{X(t)\in \dif x, X(t)+Y(t)=ct, Z(t)=0\}$ with $x\in (0,ct)$. Instead of focusing on this probability, it is convenient to study the following one, for $|v|<ct$, 
\begin{align}
p(t,v) \dif v&= P\{X(t)-Y(t)\in \dif v, X(t)+Y(t)=ct, Z(t)=0\} \label{leggeSpigolo}\\
& = P\{X(t)-Y(t)\in \dif v, X(t)+Y(t)=ct, Z(s)=0 \text{ for } s\in [0,t]\}.\nonumber
\end{align}
Let $(X,Y,Z)$ be a OSM. The particle alternates the directions $d_0$ and $d_1$ in the time interval $[0,t]$ in order to be at time $t$ on $\mathring E_{ct}^{(z)}$. We consider the densities $f_i(t,v)\dif v = P\{X(t)-Y(t)\in \dif v, X(t)+Y(t)=ct, Z(t)=0, D(t)=d_i\},\ i=0,1,$ then $p(t,v) = f_0(t,v)+f_1(t,v).$ The functions $f_i$ satisfy
\begin{equation}\label{sistemaNonDifferenzialeInizialeSpigolo}
\begin{cases}
f_0(t+\dif t,v) = f_0(t,v-c\dif t) \bigl(1-\lambda(t)\dif t\bigr) + f_1(t,v+c\dif t)\frac{\lambda(t)\dif t}{4} + o(\dif t),\\
f_1(t+\dif t,v) = f_1(t,v+c\dif t) \bigl(1-\lambda(t)\dif t\bigr) + f_0(t,v-c\dif t)\frac{\lambda(t)\dif t}{4} + o(\dif t),\\
\end{cases}
\end{equation}
and in differential form 
\begin{equation}\label{sistemaInizialeSpigolo}
\begin{cases}
\frac{\partial f_0}{\partial t} = -c\frac{\partial f_0}{\partial v}+\frac{\lambda(t)}{4}(f_1-4f_0),\\
\frac{\partial f_1}{\partial t} = c\frac{\partial f_1}{\partial v}+\frac{\lambda(t)}{4}(f_0-4f_1).
\end{cases}
\end{equation}
By considering the usual transformation $p = f_0+f_1$ and $w = f_0-f_1$ and performing some calculation we easily extract the second-order differential problem
\begin{equation}\label{sistemaInizialeSpigolo}
\begin{cases}
\frac{\partial^2 p}{\partial t^2} +2\lambda(t)\frac{\partial p}{\partial t} +\frac{3}{4}\bigl( \lambda'(t) +\frac{5}{4}\lambda(t)^2\bigr)p = c^2\frac{\partial^2 p}{\partial v^2},\\
p(t,v)\ge0,\ \ \ \int_{-ct}^{ct} p(t,v)\dif v= \frac{1}{3}\bigl(e^{-3\Lambda(t)/4}-e^{-\Lambda(t)}\bigr).
\end{cases}
\end{equation}
The third relationship easily follows from probability (\ref{probabilitaSpigoliOSM}) and by keeping in mind that the octahedron has twelve edges.
\\
We note that 
\begin{equation}\label{probabilitaPianoXY}
P\{Z(s)=0,\ s\in[0,t]\} = \frac{4}{6}\sum_{n=0}^\infty P\{N(t)=n\}\Bigl(\frac{2}{4}\Bigr)^n = \frac{2}{3}e^{-\frac{\Lambda(t)}{2}}.
\end{equation}
In light of (\ref{probabilitaPianoXY}), we consider $p(t,v)=2e^{-\Lambda(t)/2}/3 \ q(t,v)$, where $q$ can be interpreted as the conditional probability measure given $Z(s)=0$ for $s\in [0,t]$, meaning that the particle moves on the plane $x,y$ in the time interval $[0,t]$. The system (\ref{sistemaInizialeSpigolo}) becomes
\begin{equation}\label{sistemaFinaleSpigolo}
\begin{cases}
\frac{\partial^2 q}{\partial t^2} +\lambda(t)\frac{\partial q}{\partial t} +\frac{1}{4}\bigl( \lambda'(t) +\frac{3}{4}\lambda(t)^2\bigr)q = c^2\frac{\partial^2 q}{\partial v^2},\\
q(t,v)\ge0,\ \ \ \int_{-ct}^{ct} q(t,v)\dif v= \frac{1}{2}\bigl(e^{-\Lambda(t)/4}-e^{-\Lambda(t)/2}\bigr),
\end{cases}
\end{equation}
which coincides with problem (2.12) of \cite{CO2021b} with $\lambda(t)/2$ instead of $\lambda(t)$. Therefore, the distribution on the edges of an OSM, with respect to the probability measure conditioned on $\{Z(s)=0,\ s\in[0,t]\}$, is equal to the distribution on the edges of an orthogonal standard planar random motion with rate function $\lambda(t)/2, t>0$. This implies that $q$ can be expressed as the product of the probability that a particle with standard orthogonal movements on the plane reaches the edge, $e^{-\Lambda(t)/4}/2 $, and the density of a one-dimensional symmetric telegraph process with rate function $\lambda(t)/4, t>0$.
\\For $\lambda(t)=\lambda>0 \ \forall \ t$, distribution (\ref{leggeSpigolo}) reads
$$p(t,v) =  \frac{e^{-\lambda t}}{6c} \Biggl[\frac{\lambda}{4} I_0\Bigl(\frac{\lambda}{4c}\sqrt{c^2t^2-v^2}\Bigr)+\frac{\partial}{\partial t}I_0\Bigl(\frac{\lambda}{4c}\sqrt{c^2t^2-v^2}\Bigr) \Biggr].$$

The above result, suitably adapted, holds in the case of an OUM as well. In this case we must keep in mind that 
\begin{equation}\label{probabilitaPianoXYOCUM}
P\{Z(s)=0,\ s\in[0,t]\} = \frac{4}{6}\sum_{n=0}^\infty P\{N(t)=n\}\Bigl(\frac{4}{6}\Bigr)^n = \frac{2}{3}e^{-\frac{\Lambda(t)}{3}}.
\end{equation}

For both versions of the three-dimensional motion, we can obtain an even stronger result which connects the orthogonal motions in $\mathbb{R}^2$ and in $\mathbb{R}^3$.

\begin{theorem}\label{teoremaMotoSuPianoXY}
Let  $(X,Y,Z) = \big\{\bigl(X(t),Y(t),Z(t)\bigr)\big\}_{t\ge0}$ an orthogonal random motion in $\mathbb{R}^3$ with rate function $\lambda\in C^2\bigl((0,\infty),(0,\infty)\bigl)$ such that $\Lambda(t) = \int_0^t\lambda(s)\dif s<\infty, t>0$. Let $P_{xy}(\cdot)=P\{\cdot \,|\,Z(s) =0, s\in[0,t] \}$ be a conditional probability measure.
\item[$(i)$] If $(X,Y,Z)$ is an OSM, then, with respect to $P_{xy}$ it is an orthogonal standard motion on the $(x,y)$-plane with rate function $\lambda/2$;
\item[$(ii)$] if $(X,Y,Z)$ is an OUM, then, with respect to $P_{xy}$ it is an orthogonal uniform motion on the $(x,y)$-plane with rate function $\lambda/2$.
\end{theorem}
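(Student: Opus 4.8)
The plan is to prove both parts simultaneously by showing that, conditionally on $\{Z(s)=0,\ s\in[0,t]\}$, the triple (initial direction, ordered switching instants, ordered chosen directions) has exactly the law of the corresponding ingredients of the planar motion at the claimed rate. Since $(X(t),Y(t))$ is a deterministic functional of these ingredients, equality of the conditional laws yields the asserted equality in distribution. The conditioning event is exactly the event that the particle never selects one of the two out-of-plane directions $d_2,d_5$, neither initially nor at any Poisson instant. The structural observation that makes this tractable is that from \emph{every} in-plane direction the probability that a switch leaves the plane is the same constant $1-\theta$ — two of the four orthogonal directions for the OSM, two of the six admissible directions for the OUM, so $\theta=1/2$ and $\theta=2/3$ respectively. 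This state-independence means that the survival probability is identical from every surviving direction, so the conditioning acts as a pure renormalization of the in-plane sub-dynamics rather than as a genuine $h$-transform with space-dependent drift.

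First I would fix $N(t)=n$ and count. The starting direction is in-plane with probability $4/6$ and, given this, uniform over the four in-plane directions; each of the $n$ Poisson events yields an in-plane direction with probability $\theta$, independently of the instants and of the current in-plane direction. Hence
\begin{equation*}
P\{Z(s)=0,\ s\in[0,t]\}=\frac{2}{3}\sum_{n=0}^\infty P\{N(t)=n\}\,\theta^{\,n}=\frac{2}{3}\,e^{-(1-\theta)\Lambda(t)},
\end{equation*}
which reproduces $\frac{2}{3}e^{-\Lambda(t)/2}$ and $\frac{2}{3}e^{-\Lambda(t)/3}$, i.e. (\ref{probabilitaPianoXY}) and (\ref{probabilitaPianoXYOCUM}). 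Dividing, the conditional law of $N(t)$ is Poisson with parameter $\theta\Lambda(t)$, and, given $N(t)=n$, the switching instants are still distributed as the order statistics of $n$ i.i.d. variables with density $\lambda(s)/\Lambda(t)$ on $[0,t]$, the in/out dichotomy at each event being independent of its time. These are precisely the count and the ordered instants of a Poisson process of rate $\theta\lambda(\cdot)$, namely $\lambda/2$ for the OSM and $2\lambda/3$ for the OUM.

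It remains to identify the conditional switching rule. At a retained event the new direction is uniform over the in-plane directions admissible from the current one: for the OSM the two in-plane directions orthogonal to the current one, which is exactly the planar standard rule; for the OUM all four in-plane directions, which is exactly the planar uniform rule. Combining the rate $\theta\lambda$ with this rule, the conditioned process is a planar orthogonal standard motion of rate $\lambda/2$ in case $(i)$ and a planar orthogonal uniform motion of rate $\theta\lambda=2\lambda/3$ in case $(ii)$. The route that mirrors the edge computation is equivalent: one writes the six-component forward system for $(X,Y,Z)$, drops the $d_2,d_5$ components on the conditioning event, and renormalizes the four surviving equations by $\frac{2}{3}e^{-(1-\theta)\Lambda}$; the derivative of the exponential lowers the diagonal coefficient from $-\lambda$ to $-\theta\lambda$ and leaves precisely the four-component forward system of the planar motion with intensity $\theta\lambda$, just as in the passage from (\ref{sistemaInizialeSpigolo}) to (\ref{sistemaFinaleSpigolo}).

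The step needing most care is the rate bookkeeping for the OUM, where the admissible set contains the current direction and some events are ``null'' re-selections. One must decide whether the rate of the resulting planar motion refers to the full resampling intensity or to the genuine change-of-direction intensity: the computation gives full intensity $\theta\lambda=2\lambda/3$, while the genuine change intensity equals $\tfrac34\cdot\tfrac{2\lambda}{3}=\lambda/2$, the same value that governs the OSM (which has no null events). I would reconcile the stated rate $\lambda/2$ in $(ii)$ against this distinction, and, for full rigour in the time-inhomogeneous case, justify the state-independence of the survival probability — hence the triviality of the $h$-transform — directly through the renormalized forward system.
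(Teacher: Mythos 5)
Your argument is correct, and it takes a genuinely different route from the paper. The paper's proof (carried out explicitly only for part $(i)$) is the PDE route you sketch as ``equivalent'' in your third paragraph: it writes the four-component forward system (\ref{sistemaInizialeMotoSuPianoXY}) for the joint densities on the event $\{Z(s)=0,\ s\in[0,t]\}$, treats vertices and edges separately, renormalizes the interior densities by the survival probability (\ref{probabilitaPianoXY}), and identifies the resulting system (\ref{sistemaFinaleMotoSuPianoXY}) together with its total-mass condition with the system of a planar standard motion of rate $\lambda/2$ taken from \cite{CO2021b}. Your primary argument instead conditions at the level of trajectories: since the per-switch survival probability is the same constant $\theta$ from every in-plane direction, the conditioning factorizes; the conditional law of $N(t)$ and of the switching instants is that of a Poisson process of rate $\theta\lambda(\cdot)$ (your $\tfrac{2}{3}e^{-(1-\theta)\Lambda(t)}$ reproduces (\ref{probabilitaPianoXY}) and (\ref{probabilitaPianoXYOCUM})), and the conditional direction choices are uniform over the admissible in-plane options. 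This buys several things the PDE route does not give for free: equality of the laws of the whole conditioned trajectory (not merely of the fixed-$t$ densities), no case split among vertices, edges and interior, no implicit appeal to uniqueness for the differential systems, and a clean treatment of the time-inhomogeneous case — indeed the state-independence you worry about at the end needs no extra justification, since it follows from counting admissible directions and from the independence of the direction marks from the Poisson instants.

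Your hesitation about the OUM rate is not a defect of your proof; it points at a real inconsistency in statement $(ii)$. Under the paper's own convention (a uniform motion resamples uniformly among \emph{all} admissible directions, the current one included), both your thinning argument and the paper's own renormalization method give, for the conditioned OUM, the system with terms $\frac{\lambda}{6}(q_1+q_3+q_4-3q_0)$, i.e. a planar uniform motion of full resampling intensity $\theta\lambda = 2\lambda/3$ — equivalently, by the Remark 3.4 equivalence of \cite{CO2021b}, a symmetrically deviating planar motion of rate $\tfrac{3}{4}\cdot\tfrac{2\lambda}{3} = \lambda/2$. A vertex-mass check confirms this: conditionally on staying in the plane, the OUM occupies a vertex with probability $e^{-\Lambda(t)/2}$, which matches a planar uniform motion of rate $2\lambda/3$ (mass $e^{-\frac{3}{4}\cdot\frac{2}{3}\Lambda(t)}$) and not one of rate $\lambda/2$ (mass $e^{-\frac{3}{8}\Lambda(t)}$). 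So $(ii)$ is correct only if its ``uniform \dots with rate $\lambda/2$'' is read as the genuine-change (symmetrically deviating) intensity; contrast Theorem \ref{teoremaDistribuzioneFaccia}, where the paper does make this distinction explicitly and where both stated rates $\lambda/2$ are right, because a face retains $3$ of the $6$ directions ($\theta=1/2$) rather than $4$ of $6$. Since the paper's entire proof of $(ii)$ is the sentence that it ``works in the same manner'', your derivation is the one that actually settles the point: state $(ii)$ as ``uniform with rate $2\lambda/3$, equivalently symmetrically deviating with rate $\lambda/2$.''
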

We recall that in \cite{CO2021b} the authors proved that an orthogonal standard planar stochastic motion $\big\{\bigl(X(t),Y(t)\bigr)\big\}_{t\ge0}$ with rate function $\lambda\in C^2\bigl((0,\infty),(0,\infty)\bigl)$ can be expressed as
\begin{equation}\label{decomposizioneMotoOrtogonaleStandardPiano}
\begin{cases}
X(t) = U(t) +V(t),\\
Y(t) = U(t)-V(t),
\end{cases}
\end{equation}
where $U$ and $V$ are two independent one-dimensional telegraph processes with rate function $\lambda/2$ and velocity $c/2$.

Clearly, Theorem \ref{teoremaMotoSuPianoXY} equivalently holds if we consider that the motion develops on the $(x,z)$-plane or the $(y,z)$-plane for the time interval $[0,t]$.

\begin{proof}
First of all we observe that the particle lies on the $(x,y)$-plane if it moves with the directions $d_0,d_1,d_3$ and $d_4$ only.

We consider the case $(i)$. Let $Q_{ct} =\{(x,y,z)\in\mathbb{R}^3\,:\,|x|+|y|\le ct,\, z=0\}$ the support of the position of the particle with respect to $P_{xy}$. 
\\The probability concerning the vertices of $Q_{ct}$ easily follows by taking into account (\ref{probabilitaVerticiOSM}) and (\ref{probabilitaPianoXY}). We proved above the result regarding the density on the edges.
\\

We have to study the probability density inside the square $Q_{ct}$, that is, for $(x,y,z=0)\in\mathring Q_{ct}$, $p(t,x,y) \dif x\dif y = P\{X(t)\in \dif x,Y(t)\in \dif y,Z(s)=0 \text{ for } s\in[0,t] \}$. Let $f_i(t,x,y)\dif x\dif y = P\{X(t)\in \dif x,Y(t)\in \dif y,Z(s)=0 \text{ for } s\in[0,t], D(t)=d_i \}, i=0,1,3,4$. Clearly $p(t,x,y) = \sum_{i\in\{0,1,3,4\}} f_i(t,x,y)$ and it is easy to show that the probabilities $f_i$ satisfy the differential system
\begin{equation}\label{sistemaInizialeMotoSuPianoXY}
\begin{cases}
\frac{\partial f_0}{\partial t} = -c\frac{\partial f_0}{\partial x}+\frac{\lambda(t)}{4}(f_1+f_4-4f_0),\ \ \ \frac{\partial f_3}{\partial t} = c\frac{\partial f_3}{\partial x}+\frac{\lambda(t)}{4}(f_1+f_4-4f_3),\\
\frac{\partial f_1}{\partial t} = -c\frac{\partial f_1}{\partial y}+\frac{\lambda(t)}{4}(f_0+f_3-4f_1),\ \ \ \frac{\partial f_4}{\partial t} =c\frac{\partial f_4}{\partial y}+\frac{\lambda(t)}{4}(f_0+f_3-4f_4),
\end{cases}
\end{equation}
subject to $f_i(t,x,y)\ge 0\ \forall\ i$ and (keep in mind (\ref{probabilitaPianoXY}), (\ref{probabilitaVerticiOSM}) and (\ref{probabilitaSpigoliOSM}))
\begin{align} 
\int \int_{Q_{ct}} p(t,x,y) \dif x\dif y &= \frac{2}{3}e^{-\frac{\Lambda(t)}{2}} - \Bigl(\frac{2}{3}e^{-\Lambda(t)} + \frac{4}{12}\,4\bigl(e^{-\frac{3\Lambda(t)}{4}} -e^{-\Lambda(t)}\bigr)\Bigr) \nonumber\\
&= \frac{2}{3}e^{-\frac{\Lambda(t)}{2}}\Bigl(1-e^{-\frac{\Lambda(t)}{4}} \Bigr)^2. \label{condizioneProblemaMotoSuPianoXY}
\end{align}
In light of (\ref{condizioneProblemaMotoSuPianoXY}) we set $ f_i(t,x,y) = \frac{2}{3} e^{-\frac{\Lambda(t)}{2}} q_i(t,x,y),\ \ i=0,1,3,4,$ and $q(t,x,y) = \sum_{i\in\{0,1,3,4\}} q_i(t,x,y)$. Now, with (\ref{probabilitaPianoXY}) at hand, we have that 
\begin{align*}
P\{X(t)\in \dif x,Y(t)\in \dif y\,|\,Z(s)=0 \text{ for } s\in[0,t] \}=q(t,x,y) \dif x\dif y.
\end{align*}
The differential problem (\ref{sistemaInizialeMotoSuPianoXY}) with respect to functions $q_i$ reads
\begin{equation}\label{sistemaFinaleMotoSuPianoXY}
\begin{cases}
\frac{\partial q_0}{\partial t} = -c\frac{\partial q_0}{\partial x}+\frac{\lambda(t)}{4}(q_1+q_4-2q_0),\\
\frac{\partial q_1}{\partial t} = -c\frac{\partial q_1}{\partial y}+\frac{\lambda(t)}{4}(q_0+q_3-2q_1),\\
\frac{\partial q_3}{\partial t} = c\frac{\partial q_3}{\partial x}+\frac{\lambda(t)}{4}(q_1+q_4-2q_3),\\
\frac{\partial q_4}{\partial t} =c\frac{\partial q_4}{\partial y}+\frac{\lambda(t)}{4}(q_0+q_3-2q_4),
\end{cases}\text{ s.t. }  \int \int_{Q_{ct}} q(t,x,y) \dif x\dif y = \Bigl(1-e^{-\frac{\Lambda(t)}{4}} \Bigr)^2.
\end{equation}
The problem (\ref{sistemaFinaleMotoSuPianoXY}) coincides with the problem related to the derivation of the distribution of an orthogonal standard planar random motion with rate function $\lambda/2$ (see (2.3),(2.10) of \cite{CO2021b}). This concludes the proof of $(i)$.
\\

The proof of $(ii)$, concerning the OUM, works in the same manner. 
\end{proof}

Thanks to Theorem \ref{teoremaMotoSuPianoXY} and the results of \cite{CO2021b} (Remark 2.4 and Theorem 2.2), we easily obtain the explicit probabilities for some particular rate functions. For instance, if $\lambda(t) = \lambda\ \forall\ t$ and an OSM, we have that, for $|v|<ct$,
\begin{align*}
 P\{&X(t)-Y(t)\in \dif v, X(t)+Y(t)=ct, Z(t)=0\}/\dif v \\
 & = \frac{e^{-\lambda t}}{6c}\Biggl[\frac{\lambda}{4} I_0\Bigl(\frac{\lambda}{4c}\sqrt{c^2t^2-v^2}\Bigr)+\frac{\partial}{\partial t}I_0\Bigl(\frac{\lambda}{4c}\sqrt{c^2t^2-v^2}\Bigr) \Biggr],
\end{align*}
and $x,y$ such that $|x|+|y|<ct$ (see also (\ref{decomposizioneMotoOrtogonaleStandardPiano})),
\begin{align*}
P\{X(t)\in& \dif x, Y(t) \in \dif y, Z(s)=0\text{ for }s\in [0,t]\}/(\dif x\dif y)\\
&=\frac{e^{-\lambda t}}{3c} \Biggl[\frac{\lambda}{4} I_0\Bigl(\frac{\lambda}{8c}\sqrt{c^2t^2-(x+y)^2}\Bigr)+\frac{\partial}{\partial t}I_0\Bigl(\frac{\lambda}{8c}\sqrt{c^2t^2-(x+y)^2}\Bigr) \Biggr]\\
&\ \ \ \times\Biggl[\frac{\lambda}{4} I_0\Bigl(\frac{\lambda}{8c}\sqrt{c^2t^2-(x-y)^2}\Bigr)+\frac{\partial}{\partial t}I_0\Bigl(\frac{\lambda}{8c}\sqrt{c^2t^2-(x-y)^2}\Bigr) \Biggr].
\end{align*}

\subsection{Distribution on the faces of the octahedron}\label{sottosezioneDistribuzioneFaccia}

We are now interested in studying the distribution of the motion within a face of the octahedron $S_{ct}$. Without any loss of generality, let us consider the face
\begin{equation}
F^+_{ct} =\{(x,y,z)\in \mathbb{R}^3\,:\,  x,y,z\ge0,\ x+y+z = ct\}.
\end{equation}
At time $t>0$ the particle lies on the face $F^+_{ct}$ if it moved alternating the directions $d_0,d_1$ and $d_2$ only. In particular, it is located inside the face if and only if at least one displacement with each of these three directions has been performed. The main object of this section is the derivation of the probability density
\begin{equation}
p(t,x,y)\dif x\dif y = P\{ X(t)\in\dif x, Y(t)\in \dif y, X(t)+Y(t)+Z(t)=ct\},
\end{equation}
for $(x,y,ct-x-y)\in \mathring F^+_{ct}$.
\\

The transformation
\begin{equation}\label{trasformazioneFaccia}
\begin{cases}
U(t) = X(t)-\frac{1}{2}\bigl(Y(t)+Z(t)\bigr),\\
V(t) = \frac{\sqrt{3}}{2}\bigl(Y(t)-Z(t)\bigr),\\
W(t) = X(t) +Y(t) + Z(t),
\end{cases}
\end{equation}
produces a new motion in a three-dimensional space with coordinates $u,v,w$. The triangular face $F^+_{ct}$ of the octahedron is transformed into a triangle $F_{ct}'$ that lies on the plane $w = ct$. In particular, the point $(x=ct,y=0,z=0)$ is mapped into the vertex $(u=ct,v=0,w=ct)$, the point $(x=0,y=ct,z=0)$ is mapped into the vertex $(u=-ct/2,v=\sqrt{3}ct/2,w=ct )$ and the point $(x=0,y=0,z=ct)$ is mapped into the vertex $(u=-ct/2,v=-\sqrt{3}ct/2,w=ct)$.
\\

The motion $\big\{\bigl(U(t),V(t),W(t)\bigr)\big\}_{t\ge0}$ on the face $F_{ct}'$ is similar to the random motion on a plane with directions $v_0 = (c,0),v_1 = (+c/2,\sqrt{3}c/2), v_2 = (-c/2,\sqrt{3}c/2)$, see (\ref{velocitaMotoPianoLO}), described in Section \ref{sezioneMotoPianoTreDirezioni}. We now prove that there exists a very strong relationship between these two motions.

Note that the third coordinate, $W(t)$, is almost surely equal to $ct\ \forall \ t$ (so the third coordinate of the triplet $(U,V,W)$ is deterministic). Now, assume that the original motion is moving with direction $d_0$ at time $t$, then $\dif x = X(t+\dif t)-X(t) = c\dif t,\ \dif y= Y(t+\dif t)-Y(t) = 0,\ \dif z= Z(t+\dif t)- Z(t) = 0$ and by keeping in mind the transformation (\ref{trasformazioneFaccia}), $\dif u = U(t+\dif t)-U(t) = c\dif t,\ \dif v = 0$. Similarly, if the original motion devolops with direction $d_1$ then $\dif u = -c/2\dif t$ and $\dif v = \sqrt{3}c/2\dif t$, while the displacements with direction $d_2$ lead to $\dif u=-c/2\dif t$ and $\dif v= -\sqrt{3}c/2 \dif t$. Therefore, we can relate the directions of the three-dimensional motion, $d_i$, with the directions of the planar motion, $v_i$, for $i=0,1,2$.

Let us assume $(X,Y,Z)$ being an OSM. In light of the above considerations we provide the distribution of the vector process $\big\{\bigl(U(t),V(t),W(t)\bigr)\big\}_{t\ge0}$ on the face $F_{ct}'$, $f(t,u,v) \dif u\dif v = P\{U(t)\in \dif u, V(t)\in \dif v,W(t) = ct\}$. Let us consider the joint densities $f_i(t,u,v)\dif u\dif v = P\{U(t)\in \dif u, V(t)\in \dif v,W(t) = ct, D(t) = d_i\}, i=0,1,2$, then $f = f_0+f_1+f_2$. It is easy to show that these functions satisfy the following system
\begin{equation}\label{sistemaInizialeFacciaTrasformata}
\begin{cases}
\frac{\partial f_0}{\partial t} = -c\frac{\partial f_0}{\partial u}+\frac{\lambda(t)}{4}(f_1+f_2-4f_0),\\
\frac{\partial f_1}{\partial t} = \frac{c}{2}\frac{\partial f_1}{\partial u}-\frac{\sqrt{3}c}{2}\frac{\partial f_1}{\partial v} +\frac{\lambda(t)}{4}(f_0+f_2-4f_1),\\
\frac{\partial f_2}{\partial t} =\frac{c}{2}\frac{\partial f_2}{\partial u}+\frac{\sqrt{3}c}{2}\frac{\partial f_2}{\partial v} +\frac{\lambda(t)}{4}(f_0+f_1-4f_2),
\end{cases}
\end{equation}
with the condition that (use (\ref{probabilitaFacceOSM}))
\begin{equation}\label{condizioneFacciaTrasformata}
\int\int_{F_{ct}'} f(t,u,v)\dif u\dif v = P\big\{(X(t),Y(t),Z(t)\bigl)\in \mathring{F_{ct}^+}\big\} =\frac{e^{-\frac{\Lambda(t)}{2}}}{2}\Bigl(1-e^{-\frac{\Lambda(t)}{4}}\Bigr)^2.
\end{equation}
By means of the transformation $f_i(t,u,v) =\frac{e^{-\frac{\Lambda(t)}{2}}}{2} q_i(t,u,v)$ we obtain a new differential system,
\begin{equation}\label{sistemaInizialeFacciaTrasformata}
\begin{cases}
\frac{\partial q_0}{\partial t} = -c\frac{\partial q_0}{\partial u}+\frac{\lambda(t)}{4}(q_1+q_2-2q_0),\\
\frac{\partial q_1}{\partial t} = \frac{c}{2}\frac{\partial q_1}{\partial u}-\frac{\sqrt{3}c}{2}\frac{\partial q_1}{\partial v} +\frac{\lambda(t)}{4}(q_0+q_2-2q_1),\\
\frac{\partial q_2}{\partial t} =\frac{c}{2}\frac{\partial q_1}{\partial u}+\frac{\sqrt{3}c}{2}\frac{\partial q_1}{\partial v} +\frac{\lambda(t)}{4}(q_0+q_1- 2q_2),
\end{cases}
\end{equation}
where $q=q_0+q_1+q_2$ satisfies the boundary condition $\int\int_{F_{ct}'} q(t,u,v)\dif u\dif v =\Bigl(1-e^{-\frac{\Lambda(t)}{4}}\Bigr)^2$. This problem coincides with the differential system solved by the joint distributions of a symmetrically deviating planar random motion with directions $v_0,v_1,v_2$ and rate function $\lambda(t)/2, t>0$, see (\ref{sistemaDifferenzialeMotoPianoUniformeLO}) and Remark \ref{motoPianoTreDirezioniSD} (or equivalently a planar uniform motion with rate function $3\lambda(t)/4, t>0$). Therefore $q$ is the probability density of such a  motion on $\mathbb{R}^2$ (see Section \ref{sezioneMotoPianoTreDirezioni} for the constant rate case).

Note also that 
\[ P\{W(t) = ct\} = P\{X(t)+Y(t)+Z(t) = ct \} =   P\big\{(X(t),Y(t),Z(t)\bigl)\in {F_{ct}^+}\big\} =\frac{e^{-\frac{\Lambda(t)}{2}}}{2}.\]
Thus, $q$ is the conditional probability density of $\bigl(U(t),V(t)\bigr)$, given that $W(t) = ct$, and
\begin{align}
P\{&X(t)\in \dif x, Y(t)\in \dif y,X(t)+Y(t)+Z(t) = ct\}/(\dif x\dif y)\\
& = \frac{3\sqrt{3}}{2}f\Bigl(t,\frac{3x-ct}{2},\sqrt{3}y+\frac{x-3ct}{2}\Bigr) = \frac{3\sqrt{3}}{4}e^{-\frac{\Lambda(t)}{2}}q\Bigl(t,\frac{3x-ct}{2},\sqrt{3}y+\frac{x-3ct}{2}\Bigr) \nonumber.
\end{align}

The next theorem states what we proved above, also in the case of an uniform motion, where the proof works in the same way.

\begin{theorem}\label{teoremaDistribuzioneFaccia}
Let  $(X,Y,Z) = \big\{\bigl(X(t),Y(t),Z(t)\bigr)\big\}_{t\ge0}$ an orthogonal random motion in $\mathbb{R}^3$ with rate function $\lambda\in C^2\bigl((0,\infty),(0,\infty)\bigl)$ such that $\Lambda(t) = \int_0^t\lambda(s)\dif s<\infty, t>0$. Let $p(t,x,y)\dif x\dif y = P\{X(t)\in \dif x, Y(t)\in \dif y,X(t)+Y(t)+Z(t) = ct\}$, with $(x,y,ct-x-y)\in \mathring F_{ct}^+$, then
\begin{equation}\label{leggeFacciaEnunciato}
p(t,x,y)=\frac{3\sqrt{3}}{4}e^{-\frac{\Lambda(t)}{2}}q\Bigl(t,\frac{3x-ct}{2},\sqrt{3}y+\frac{x-3ct}{2}\Bigr)
\end{equation}
where
\item[$(i)$] if $(X,Y,Z)$ is an OSM, $q$ is the transition density of a symmetrically deviating planar random motion with directions (\ref{velocitaMotoPianoLO}) and rate function $\lambda/2$;
\item[$(ii)$] if $(X,Y,Z)$ is an OUM, $q$ is the transition density of a uniform planar random motion with directions (\ref{velocitaMotoPianoLO}) and rate function $\lambda/2$.
\end{theorem}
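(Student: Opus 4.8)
The plan is to recognize that the statement is essentially a clean reformulation of the reduction carried out just above it, so the proof amounts to organizing that reduction and then treating the two models uniformly. First I would record the geometric fact that the particle occupies $\mathring F_{ct}^+$ exactly when it has used only the three directions $d_0,d_1,d_2$ up to time $t$, and that under the linear map (\ref{trasformazioneFaccia}) the third coordinate $W(t)=X(t)+Y(t)+Z(t)$ is frozen at $ct$ on this event, while the infinitesimal increments produced by $d_0,d_1,d_2$ are sent precisely to the planar velocities $v_0,v_1,v_2$ of (\ref{velocitaMotoPianoLO}). This turns $\bigl(U(t),V(t)\bigr)$ into a genuine three-direction planar motion on the triangle $F_{ct}'$, so it only remains to identify its switching mechanism.

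Next I would split the on-face density according to the current direction, $f_i(t,u,v)\dif u\dif v = P\{U(t)\in\dif u,V(t)\in\dif v,W(t)=ct,D(t)=d_i\}$, and write the forward balance equations. The delicate bookkeeping here is that the loss term for $f_i$ must account for \emph{every} switch away from $d_i$ — including the switches into $d_3,d_4,d_5$ that carry the particle off the face — whereas the gain terms receive contributions only from the on-face transitions $d_j\to d_i$ with $j\in\{0,1,2\}$. For the OSM this produces the coefficient $\tfrac{\lambda}{4}$ on the gains and a total outflow rate $\lambda$ on each $f_i$, i.e. the system (\ref{sistemaInizialeFacciaTrasformata}).

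The key simplification is then to factor out the probability of remaining on the face. A direct count shows $P\{W(t)=ct\}=\tfrac12 e^{-\Lambda(t)/2}$ in both models (the initial direction lies in $\{d_0,d_1,d_2\}$ with probability $1/2$, and at each Poisson event the direction stays in this set with probability $1/2$), so I would set $f_i=\tfrac12 e^{-\Lambda(t)/2}q_i$. Substituting this and using $\Lambda'(t)=\lambda(t)$ cancels exactly the excess outflow, leaving the $q_i$-system whose off-diagonal rate is $\tfrac{\lambda}{4}$ against the diagonal term $-2q_i$; this is precisely the system (\ref{sistemaDifferenzialeMotoPianoUniformeLO}) of a symmetrically deviating planar motion with directions (\ref{velocitaMotoPianoLO}) at rate $\lambda/2$ (equivalently, by Remark \ref{motoPianoTreDirezioniSD}, a uniform planar motion at rate $3\lambda/4$). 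Hence $q=q_0+q_1+q_2$ is the transition density named in part $(i)$.

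For part $(ii)$ the argument is identical except for the rate accounting: under the OUM each Poisson event resamples uniformly among all six directions at rate $\lambda/6$ apiece, the self-transition $d_i\to d_i$ drops out of the balance equations, and the effective outflow from $f_i$ becomes $\tfrac{5\lambda}{6}f_i$ while each on-face inflow carries weight $\tfrac{\lambda}{6}$. The same substitution $f_i=\tfrac12 e^{-\Lambda(t)/2}q_i$ (with the identical on-face probability $\tfrac12 e^{-\Lambda(t)/2}$) reduces the diagonal to $-2q_i$ against off-diagonal rate $\tfrac{\lambda}{6}$, which is the uniform planar system (\ref{sistemaDifferenzialeMotoPianoUniformeLO}) at rate $\lambda/2$. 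Finally, in both cases I would pass from $(U,V)$ back to $(X,Y)$ on the face (where $Z=ct-X-Y$): inverting (\ref{trasformazioneFaccia}) gives the stated arguments of $q$, and the Jacobian of the map $(x,y)\mapsto(u,v)$ equals $\tfrac{3\sqrt3}{2}$, which multiplied by the normalizing factor $\tfrac12 e^{-\Lambda(t)/2}$ yields the constant $\tfrac{3\sqrt3}{4}e^{-\Lambda(t)/2}$ of (\ref{leggeFacciaEnunciato}). The main obstacle is the rate bookkeeping in the balance equations together with verifying that the on-face exponential prefactor is the same $\tfrac12 e^{-\Lambda(t)/2}$ for both models; once these are pinned down the remaining change of variables is routine.
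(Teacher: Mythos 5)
Your proposal is correct and follows essentially the same route as the paper's own proof: the transformation (\ref{trasformazioneFaccia}) identifying the on-face increments with the planar velocities (\ref{velocitaMotoPianoLO}), the direction-indexed balance equations for the $f_i$, the factorization by $P\{W(t)=ct\}=\tfrac{1}{2}e^{-\Lambda(t)/2}$ yielding the planar systems at rate $\lambda/2$, and the Jacobian $3\sqrt{3}/2$ to return to the $(x,y)$ coordinates. The only difference is cosmetic: you spell out the OUM rate bookkeeping explicitly, which the paper dispatches with ``the proof works in the same way.''
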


It is straightforward that, if $\lambda(t) = \lambda>0\ \forall \ t$, then function $q$ in (\ref{leggeFacciaEnunciato}) is given by formula (\ref{distribuzioneMotoPianoUniformeLO}) (or equivalently by (\ref{distribuzioneIntegraleMotoPianoUniformeLO})) with respectively $3\lambda/4$ and $\lambda/2$ replacing $\lambda$ in the cases $(i)$ and $(ii)$.

\begin{remark}[Governing differential equation]
For the sake of brevity we only consider the case of an OSM with constant rate function. Let $f_i(t,x,y)\dif x\dif y = P\{ X(t)\in\dif x, Y(t)\in \dif y, X(t)+Y(t)+Z(t)=ct,D(t)=d_i\},\ i=0,1,2$. Now, $p$ is the sum of the $f_i$ which satisfy the following differential system
\begin{equation}\label{sistemaInizialeMotoSuFaccia}
\begin{cases}
\frac{\partial f_0}{\partial t} = -c\frac{\partial f_0}{\partial x}+\frac{\lambda}{4}(f_1+f_2-4f_0),\\
\frac{\partial f_1}{\partial t} = -c\frac{\partial f_1}{\partial y}+\frac{\lambda}{4}(f_0+f_2-4f_1),\\
\frac{\partial f_2}{\partial t} =\frac{\lambda}{4}(f_0+f_1-4f_2),
\end{cases}
\end{equation}
This can be computed by proceeding as we explained for the distribution on an edge, see (\ref{sistemaNonDifferenzialeInizialeSpigolo}) and (\ref{sistemaInizialeSpigolo}).
By means of the transformation $g_0 = f_0+f_1,\ g_1=f_0-f_1$ ($f_2$ is kept unchanged) and some calculation, we obtain a differential system in $g_0$ and $f_2$ only,
\begin{equation}\label{sistemaSecondoMotoSuFaccia}
\begin{cases}
\frac{\partial^2 g_0}{\partial t^2}= -c^2 \frac{\partial^2 g_0}{\partial x\partial y} -c \Bigl(\frac{\partial }{\partial x}+\frac{\partial }{\partial y}\Bigr)\Bigl(\frac{\partial g_0}{\partial t} +\lambda g_0-\frac{\lambda}{4}f_2 \Bigr) +\frac{\lambda}{2}\Bigl(\frac{\partial f_2}{\partial t}-4\frac{\partial g_0}{\partial t}\Bigr)+\frac{5}{16}\lambda^2(2f_2-3g_0),\\
 \frac{\partial f_2}{\partial t} =\frac{\lambda}{4}(g_0-4f_2).
\end{cases}
\end{equation}
Let us write the first equation as $\frac{\partial^2 g_0}{\partial t^2} = A g_0+ Bf_2$, with $A, B$ being suitable differential operators. By using a second transformation, $p=g_0+f_2,\ w = g_0-f_2$, we readily arrive at the problem
\begin{equation}\label{sistemaTerzoMotoSuFaccia}
\begin{cases}
\frac{\partial^2 w}{\partial t^2} =  \Bigl(A + B +\frac{3\lambda}{4}\frac{\partial }{\partial t}\Bigr)\frac{p}{2}+\Bigl(A - B -\frac{5\lambda}{4}\frac{\partial }{\partial t}\Bigr)\frac{w}{2}, \\
\Bigl(\frac{\partial }{\partial t} +\frac{3}{4}\lambda \Bigr) p = \Bigl(\frac{\partial }{\partial t} +\frac{5}{4}\lambda \Bigr) w.
\end{cases}
\end{equation}
Finally, by deriving twice with respect to $t$ the second equation of (\ref{sistemaTerzoMotoSuFaccia}) and by suitably using both equations of the system (\ref{sistemaTerzoMotoSuFaccia}), we arrive at
$$\Bigl(\frac{\partial }{\partial t} +\frac{3}{4}\lambda \Bigr) \frac{\partial^2 p}{\partial t^2} =   \Bigl( \lambda A + A\frac{\partial }{\partial t} + \frac{\lambda}{4}B-\frac{\lambda}{4}\frac{\partial^2 }{\partial t^2}\Bigr) p$$
which, by expanding $A$ and $B$, reads
\begin{equation*}
\frac{\partial^3p }{\partial t^3}+3\lambda\frac{\partial^2p }{\partial t^2}+ \frac{45\lambda^2}{16}\frac{\partial p }{\partial t} +\frac{25\lambda^3}{32}p = -c \Bigl(\frac{\partial }{\partial x}+\frac{\partial }{\partial y}\Bigr)\Bigl(\frac{\partial^2 }{\partial t^2} +2\lambda\frac{\partial}{\partial t}+\frac{15}{16}\lambda^2 \Bigr) p -c^2\frac{\partial^2}{\partial x\partial y}\Bigl(\frac{\partial }{\partial t}+\lambda\Bigr)p.
\end{equation*}
The interested reader can compute the governing equation of an OUM by following substantially the strategy adopted above.
\hfill$\diamond$
\end{remark}

\section{Random times spent along the coordinated axes}

We study the distribution of the random times that the particle spends moving parallel to each axis. In particular, we are interested in the stochastic vector process given by $(T_x,T_y,T_z) = \big\{\bigl(T_x(t),T_y(t),T_z(t)\bigr)\big\}_{t\ge0}$, where, for $t\ge0$, $T_x(t), T_y(t)$ and $T_z(t)$ respectively denote the times spent up to time $t$ along the $x$-axis (meaning moving either with directions $d_0$ or $d_3$), the $y$-axis (meaning moving with directions $d_1,d_4$) and the $z$-axis (meaning moving upward with directions $d_2$ or downward with direction $d_5$). Note that these random times satisfy almost surely $T_x(t)+T_y(t)+T_z(t)=t\ \forall\ t$. Therefore, the knowledge of two of them, for instance $T_x$ and $T_y$, is sufficient to describe the triplet.

We begin by studying the distribution of the marginal process $T_z$. The next theorem states that, both in the case of an OSM and an OUM, $T_z$ coincides with a particular one-dimensional random motion that either moves with velocity $1$ or stops. Clearly, thanks to the symmetry properties of the orthogonal motions we are dealing with, the same result holds for the processes $T_x$ and $T_y$.

\begin{theorem}\label{teoremaTz}
Let $T_z = \{T_z(t)\}_{t\ge0}$ be the random time that an orthogonal motion in $\mathbb{R}^3$, $(X,Y,Z)$ with rate function $\lambda\in C^1\bigl((0,\infty),(0,\infty)\bigl) $, spends moving parallel to the $z$-axis. Then $T_z$ is an asymmetric one-dimensional random motion with velocity $V_z(t)\in\{0,1\} a.s., t>0$, such that
\begin{equation}\label{velocitaInizialeTz}
V_z(0) = \begin{cases}
1, \ \ \text{w.p. } 1/3,\\
0, \ \ \text{w.p. } 2/3. \end{cases} 
\end{equation}
Furthermore the changes of velocity follow the following rule:
\item[$(i)$] if $(X,Y,Z)$ is an OSM, when $V_z(t) = 1$ the rate function is $\lambda_1(t) = \lambda(t), t>0,$ and when $V_z(t) = 0$ the rate function is $\lambda_0(t) = \lambda(t)/2, t>0$;
\item[$(ii)$] if $(X,Y,Z)$ is an OUM, when $V_z(t) = 1$ the rate function is $\lambda_1(t) = 2\lambda(t)/3, t>0,$ and when $V_z(t) = 0$ the rate function is $\lambda_0(t) = \lambda(t)/3, t>0$.
\end{theorem}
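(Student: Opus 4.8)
The plan is to realize $T_z$ explicitly as the time-integral of a two-valued velocity and then to identify the law of that velocity with a two-state Markov jump process via a lumping argument. Write $D=\{D(s)\}_{s\ge0}$ for the (time-inhomogeneous) direction process of the three-dimensional motion: it is a pure-jump Markov process on the six states $d_0,\dots,d_5$ whose jumps occur at the Poisson rate $\lambda(t)$ and are redistributed according to the OSM (resp. OUM) switching rule. Setting $V_z(s)=\mathbf{1}\{D(s)\in\{d_2,d_5\}\}$, the particle advances parallel to the $z$-axis exactly when $D(s)\in\{d_2,d_5\}$, so that $T_z(t)=\int_0^t V_z(s)\,\dif s$. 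Hence $T_z$ is, by construction, a non-decreasing one-dimensional motion moving at unit speed when $V_z=1$ and pausing when $V_z=0$, and it only remains to determine the law of $V_z$. Since $D(0)$ is uniform on the six directions and precisely two of them ($d_2,d_5$) lie along the $z$-axis, we immediately get $V_z(0)=1$ with probability $2/6=1/3$ and $V_z(0)=0$ with probability $4/6=2/3$, which is (\ref{velocitaInizialeTz}).

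The core of the argument is to show that $V_z$, the indicator of the block $B_1=\{d_2,d_5\}$ (with complementary block $B_0=\{d_0,d_1,d_3,d_4\}$), is itself a Markov process; this is exactly the \emph{strong-lumpability} condition, namely that the aggregated one-jump probability from a given block into the other is the same for every state of the block, so that only the common factor $\lambda(t)$ survives as a time dependence. For the OSM, a jump from either $z$-direction $d_2$ or $d_5$ is uniform over the four directions orthogonal to it, which in both cases are exactly the four $xy$-directions $d_0,d_1,d_3,d_4$; hence from every state of $B_1$ the jump lands in $B_0$ with probability one. Conversely, a jump from any $xy$-direction, say $d_0$, is uniform over the directions orthogonal to it, $\{d_1,d_2,d_4,d_5\}$, of which exactly two ($d_2,d_5$) belong to $B_1$, and the same count holds starting from $d_1,d_3,d_4$; thus from every state of $B_0$ the jump lands in $B_1$ with probability $2/4=1/2$. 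These aggregated probabilities depend only on the block, so strong lumpability holds and $V_z$ is a time-inhomogeneous two-state Markov chain. By Poisson thinning, its transition rate out of state $1$ is $\lambda(t)\cdot 1=\lambda(t)$ and out of state $0$ is $\lambda(t)\cdot(1/2)=\lambda(t)/2$, which is precisely case $(i)$.

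For the OUM the verification is immediate: a jump from any direction is uniform over all six directions, so the probability of landing in $B_1=\{d_2,d_5\}$ is $2/6=1/3$ from every state, irrespective of the starting block, and strong lumpability follows at once. Thinning then gives the rate out of state $1$ equal to $\lambda(t)\cdot(4/6)=2\lambda(t)/3$ (the chance of leaving the two $z$-directions) and the rate out of state $0$ equal to $\lambda(t)\cdot(2/6)=\lambda(t)/3$, establishing case $(ii)$. Finally, since the six directions are interchanged by the obvious permutations of the coordinate axes, which preserve both switching rules, the identical statement for $T_x$ and $T_y$ follows by relabelling.

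The step I expect to require the most care is the lumpability verification for the OSM, because there the switching kernel genuinely depends on the current direction, so one must use the orthogonality structure of the six directions to confirm that the block-to-block counts ($1$ out of $B_1$, $1/2$ out of $B_0$) are independent of the particular representative chosen. Once this is settled, the passage to the effective rates is a routine application of Poisson thinning; in particular, the ``invisible'' within-block jumps (such as $d_0\to d_1$, which change $D$ but not $V_z$) do not affect the law of $V_z$, precisely because they are discarded by the lumping and the switching clock is memoryless.
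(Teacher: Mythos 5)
Your proof is correct, but it takes a genuinely different route from the paper's. The paper argues analytically: it introduces the joint densities $f_i(t,s)\dif s = P\{T_z(t)\in\dif s,\ V_z(t)=i\}$, $i=0,1$, derives their forward system from infinitesimal balance relations (for the OSM, $\partial_t f_0 = -\tfrac{\lambda(t)}{2}f_0+\lambda(t)f_1$ and $\partial_t f_1 = -\partial_s f_1+\tfrac{\lambda(t)}{2}(f_0-2f_1)$), reduces it via $p=f_0+f_1$, $w=f_0-f_1$ to the single second-order equation (\ref{equazioneLeggeTz}), and then identifies that equation, together with the boundary conditions coming from the singular masses at $s=0$ and $s=t$, with equation (4.14) of \cite{CO2021a}, i.e.\ the governing equation of the asymmetric motion with velocities $c_1=1$, $c_2=0$ and rates $\lambda_1=\lambda$, $\lambda_2=\lambda/2$. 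You instead argue probabilistically: you write $T_z(t)=\int_0^t V_z(s)\dif s$ with $V_z(s)=\mathbf{1}\{D(s)\in\{d_2,d_5\}\}$ and show that $V_z$ is itself a two-state Markov chain by verifying strong lumpability of the direction process over the partition $\{d_2,d_5\}$ versus $\{d_0,d_1,d_3,d_4\}$ (block-exit probabilities $1$ and $1/2$ for the OSM, $2/3$ and $1/3$ for the OUM), after which the stated rates follow by thinning the Poisson switching clock; your lumpability counts and the resulting rates are all correct, including the subtle OSM case where the kernel depends on the current direction. Your route is more elementary and structural: it explains why the rates take these values, needs no external reference, and does not actually use the smoothness of $\lambda$ that the paper assumes. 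What the paper's route buys is the explicit equation (\ref{equazioneLeggeTz}) itself, which is reused immediately afterwards to produce the closed-form density (\ref{formaEsplicitaTempoTz}) in the constant-rate case; to recover such explicit formulas from your argument one would still have to write down that equation or invoke the known asymmetric-telegraph distributions.
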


Note that in the case of a rate function $\lambda$ such that $\Lambda(t) = \int_0^t \lambda(s)\dif s<\infty\ \forall \ t$, then for all $t>0$, $T_z(t)$ has two singular components at the border of its support, $[0,t]$:
\begin{itemize}
\item[($a$)] if $(X,Y,Z)$ is an OSM, $P\{T_z(t) = 0\} = 2e^{-\Lambda(t)/2}/3$ and $P\{T_z(t) = t\} = e^{-\Lambda(t)}/3$,
\item[($b$)] if $(X,Y,Z)$ is an OUM, $P\{T_z(t) = 0\} = 2e^{-\Lambda(t)/3}/3$ and $P\{T_z(t) = t\} = e^{-2\Lambda(t)/3}/3$.
\end{itemize}

\begin{proof}
Firstly, we observe that for $t\ge0$, $V_z(t) = 1$ if the orthogonal motion $(X,Y,Z)$ moves with directions $d_2$ or $d_5$ at time $t$, while $V_z(t) =0$ if $(X,Y,Z)$ moves parallel to the $(x,y)$-plane, that is with one of the four directions $d_0,d_1,d_3,d_4$. Then, at time $t=0$ we have (\ref{velocitaInizialeTz}). 

We now prove the theorem in the case of $(X,Y,Z)$ being an OSM (The OUM case is derived by proceeding in the same manner). Let $t>0$. For $s\in [0,t]$, we define $p(t,s) \dif s= P\{T_z(t)\in \dif s\}$ and $f_i(t,s) \dif s= P\{T_z(t)\in \dif s,V_z(t)=i\}$, with $i=0,1$. The functions $f_i$ satisfy
\begin{equation}\label{sistemaInizialeTz}
\begin{cases}
\frac{\partial f_0}{\partial t} = -\frac{\lambda(t)}{2}f_0+\lambda(t)f_1,\\
\frac{\partial f_1}{\partial t} = -\frac{\partial f_1}{\partial s}+\frac{\lambda(t)}{2}(f_0-2f_1).
\end{cases}
\end{equation}
For instance, the first equation of (\ref{sistemaInizialeTz}) is obtained by observing that 
$$ f_0(t+\dif t,s) = f_0(t,s)\bigl(1-\lambda(t)\dif t\bigr)+f_1(t,s-\dif t)\lambda(t)\dif t+f_0(t,s)\frac{\lambda(t)}{2}\dif t+o(\dif t),$$
where the first term in the right-hand side pertains to the fact that the particle moves on the $(x,y)$-plane and nothing changes its status. The second term is due to the fact that the particle moves vertically ($d_2$ or $d_5$) and the occurrence of a Poisson event takes it to the orthogonal plane. Finally, the last term is due to the fact that the Poisson event occurring while the particle moves on the $(x,y)$-plane, leaves it on the same plane because the vertical directions are not chosen. The second equation of (\ref{sistemaInizialeTz}) can be derived with similar arguments.

Now, by means of the change $p = f_0+f_1,\ w = f_0-f_1$ and with the same technique used above, we obtain a second-order partial differential equation on $p$, that is
\begin{equation}\label{equazioneLeggeTz}
\frac{\partial^2 p}{\partial t^2} +\frac{\partial^2 p}{\partial t \partial s} +\frac{3\lambda(t)}{2}\frac{\partial p}{\partial t}+\frac{\lambda(t)}{2}\frac{\partial p}{\partial s}= 0.
\end{equation}
The proof ends by observing that equation (\ref{equazioneLeggeTz}) coincides with equation (4.14) of Cinque and Orsingher \cite{CO2021a} with $c_1 = 1,c_2 = 0,\lambda_1(t) = \lambda(t)$ and $\lambda_2(t) = \lambda(t)/2$ (in the statement of the theorem $\lambda_0(t) = \lambda_2(t)$) and by keeping in mind the boundary conditions, $p\ge0$, $\int_0^t p(t,s)\dif s = 1-\frac{2}{3}e^{-\frac{\Lambda(t)}{2}}-\frac{1}{3}e^{-\Lambda(t)}$ (see above in ($a$)), if $\Lambda<\infty $, and $\int_0^t p(t,s)\dif s =1\ \forall\ t $ otherwise. 
\end{proof}

\begin{remark}
In the case of a constant rate function $\lambda(t) = \lambda> 0\ \forall \ t,$ we are able to display the explicit form of the transition density of $T_z$ by suitably adapting the known conditional distributions of the asymmetric telegraph process (see for instance formula (2.15) of Cinque \cite{C2022} or at the end of Section 2.2 of Lopez and Ratanov \cite{LR2014}). For instance, if $(X,Y,Z)$ is an OSM, with (\ref{velocitaInizialeTz}) at hand, we obtain that, for $s\in (0,t)$,
\begin{align}
P\{T_z(t)\in \dif s\}/\dif s & = \frac{\lambda e^{-\frac{\lambda}{2}(t+s)}}{3} \Bigl[2\,I_0\bigl(\lambda \sqrt{2s(t-s)}\bigr) + \frac{2t-s}{\sqrt{2s(t-s)}}\,I_1\bigl(\lambda \sqrt{2s(t-s)}\bigr)\Bigr]\nonumber \\
& = \frac{ e^{-\frac{\lambda}{2}(t+s)}}{3} \Bigl(2\lambda +3\frac{\partial }{\partial t}+2\frac{\partial }{\partial s}\Bigr)\,I_0\bigl(\lambda \sqrt{2s(t-s)}\bigr).\label{formaEsplicitaTempoTz}
\end{align}
To check that probability density (\ref{formaEsplicitaTempoTz}) satisfies the condition $\int_0^t P\{T_z(t)\in \dif s\}= 1-\frac{e^{-\lambda t}}{3} - \frac{2}{3}e^{-\frac{\lambda t}{2}}$, the following integral of the modified Bessel function of order $0$ is useful,
\begin{equation}\label{integraleEsponenzialePerBessel}
\int_0^t e^{\beta s} I_0\bigl(\alpha\sqrt{s(t-s)}\bigr) \dif s = \frac{e^{\frac{\beta t}{2}}}{\sqrt{\alpha^2 + \beta^2}} \Bigl(e^{\frac{t}{2}\sqrt{\alpha^2 + \beta^2}} -e^{-\frac{t}{2}\sqrt{\alpha^2 + \beta^2}} \Bigr),
\end{equation}
with $\alpha,\beta\in \mathbb{R}$.

Alternatively, one can achieve formula (\ref{formaEsplicitaTempoTz}) by considering that the function
\begin{align}
p(t,s) = e^{-\frac{\lambda}{2}(t+s)} \Bigl(A +B\frac{\partial }{\partial t}+C\frac{\partial }{\partial s}\Bigr)\,I_0\bigl(\lambda \sqrt{2s(t-s)}\bigr)
\end{align}
satisfies (\ref{equazioneLeggeTz}) (this can be proved by observing that each term involving the Bessel function satisfies the differential equation obtained by applying the transformation $p(t,s) = e^{-\frac{\lambda(t+s)}{2}}q(t,s)$). Then, the coefficients $A,B,C$ are calculated by taking into account the singular component of the motion, derived by suitably applying the integral (\ref{integraleEsponenzialePerBessel}).
 \hfill$\diamond$
\end{remark}

\subsection{Joint distribution of $\bigl(T_z(t), Z(t)\bigr)$}

\begin{figure}
		\centering
		\begin{tikzpicture}[scale = 0.71]
		\draw[dashed, gray] (0,0) -- (6,0) node[below right, black, scale = 0.9]{$t$};
		\draw[dashed, gray] (0,0) -- (6,4.8) circle(2pt) node[above, black, scale = 0.9]{A};
		\filldraw[blue] (6,4.8) circle(2pt);
		\draw[dashed, gray] (6,4.8) -- (6,-4.8) node[below, black, scale = 0.9]{B};
		\filldraw[blue] (6,-4.8) circle(2pt);		
		\draw[dashed, gray] (6,-4.8) -- (0,0) node[below left, black, scale = 0.9]{O};
		\draw (0,4.8) node[right, scale =0.9]{$ct$};
		\draw (0,-4.8) node[left, scale =0.9]{$-ct$};
		\draw[->, thick, gray] (-1.5,0) -- (7.5,0) node[below, scale = 1, black]{$\pmb{T_z(t)}$};
		\draw[->, thick, gray] (0,-5.5) -- (0,5.5) node[left, scale = 1, black]{ $\pmb{Z(t)}$};
		\draw (0,0)--(2,1.6)--(2.8,0.96)--(5,2.72)--(5, 2.72)--(5.5,3.12);
		\filldraw (0,0) circle (1pt); \filldraw (2,1.6) circle (1pt); \filldraw (2.8,0.96)circle (1pt); \filldraw (5, 2.72) circle (1.5pt); \filldraw (5.5,3.12) circle (1pt);
		\draw (0,0)--(1,-0.8)--(1,-0.8)--(2.5, -2)--(3.5,-1.2)--(3.7,-1.04)--(4,-1.28);
		\filldraw (0,0) circle (1pt); \filldraw (1,-0.8) circle (1.5pt); \filldraw(2.5,-2) circle (1pt); \filldraw(3.5,-1.2) circle (1pt); \filldraw(3.7,-1.04) circle (1pt); \filldraw(4,-1.28) circle (1pt);
		\draw (0,0)--(0.4,-0.32)--(1.8,0.8)--(2.5, 0.24)--(3, 0.64)--(3.3,0.88)--(4,1.44)--(5.2,0.48)--(5.7,0.08)--(6,0.32);
		\filldraw (0,0) circle (1pt); \filldraw (0.4,-0.32) circle (1pt); \filldraw(1.8,0.8) circle (1pt); \filldraw(2.5, 0.24) circle (1pt); \filldraw(3, 0.64) circle (1pt); \filldraw(3.3,0.88) circle (1pt); \filldraw(4,1.44) circle (1pt);\filldraw(5.2,0.48) circle (1pt); \filldraw(5.7,0.08) circle (1pt); \filldraw(6,0.32) circle (1pt);
		\end{tikzpicture}
		\caption{\small Sample paths of $(T_z, Z)$ in the case of an orthogonal uniform motion.}\label{graficoTzZ}
\end{figure}
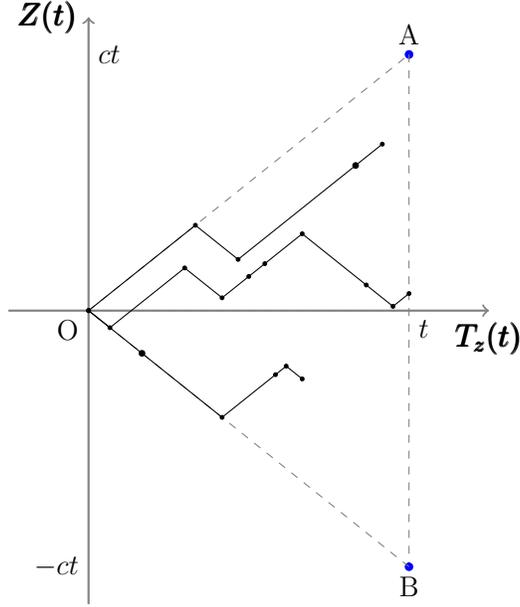

We present some considerations on the vector process $(T_z,Z)=\big\{\bigl(T_z(t), Z(t)\bigr)\big\}_{t\ge0}$ describing the time $T_z(t)$ that the particle spends parallel to the $z$-axis in the time interval $[0,t]$ and the vertical level $Z(t)$ reached at time $t$. Both in the OSM and the OUM case we observe that $T_z(t) = 0\implies Z(t) = 0$, $Z(t) =\pm ct \implies T_z(t) = t$ and $T_z(t) = s \implies Z(t)\in[-cs, cs]$ with $0<s<t$. However, for the OSM we also have that  $T_z(t) = t \implies Z(t) \in\{\pm ct\}$. Thus, the support of the vector, at time $t>0$ is 
$$ \mathcal{T}_{ct} = \{(s,z)\in\mathbb{R}^2\,:\,0\le s\le t,-cs\le z\le cs\}$$
as shown in Figure \ref{graficoTzZ}. We must keep in mind that in the case of an OSM the motion reaches with null probability the set $\{(s,z)\in\mathbb{R}^2\,:\,s=t,-ct<z<ct\}$, i.e. the inner points of the segment $AB$ in Figure \ref{graficoTzZ}.
\\

Now, we assume a constant rate function $\lambda(t)=\lambda>0  \ \forall \ t$.
By means of the above method we obtain that, if $(X,Y,Z)$ is an OUM, the distribution $p(t,s,z)\dif s\dif z = P\{T_z(t)\in \dif s, Z(t)\in\dif z\}, (s,z)\in \mathring{\mathcal{T}_{ct}}$, satisfies the following third-order differential equation:
\begin{equation}\label{equazioneTempoPosizioneCongiunta}
\frac{\partial^3 p}{\partial t^3} +2 \frac{\partial^3 p}{\partial t^2 \partial s} + \frac{\partial^3 p}{\partial t \partial s^2} +\frac{7\lambda}{3}\frac{\partial^2 p}{\partial t^2} +\frac{\lambda}{3}\frac{\partial^2 p}{\partial s^2}+\frac{7\lambda}{3} \frac{\partial^2 p}{\partial t \partial s} + \lambda^2\frac{\partial p}{\partial t} +\frac{\lambda^2}{3}\frac{\partial p}{\partial s}= c^2 \frac{\partial^3 p}{\partial t \partial z^2} + \frac{c^2\lambda}{3}\frac{\partial^2 p}{\partial z^2} .
\end{equation}
This is the equation governing the transition density of a planar random motion with directions $v_0 = (0,0), v_1=(1,c)$ and $v_2=(1,-c)$ whose changes of direction are paced by a homogeneous Poisson process with constant rate $\lambda$ and such that at each Poisson event (and at time $t=0$) the new direction is chosen with the following rule: $v_0$ with probability $2/3$, $v_1$ and $v_2$ with probability $1/6$ each.

Similar considerations hold in the case of an OSM, where the third-order differential equation governing the joint probability $p$ reads
\begin{equation}\label{equazioneTempoPosizioneCongiuntaOSM}
\frac{\partial^3 p}{\partial t^3} +2 \frac{\partial^3 p}{\partial t^2 \partial s} + \frac{\partial^3 p}{\partial t \partial s^2} +\frac{5\lambda}{2}\frac{\partial^2 p}{\partial t^2} +\frac{\lambda}{2}\frac{\partial^2 p}{\partial s^2}+3\lambda \frac{\partial^2 p}{\partial t \partial s} + \frac{3\lambda^2}{2}\frac{\partial p}{\partial t} +\frac{\lambda^2}{2}\frac{\partial p}{\partial s}= c^2 \frac{\partial^3 p}{\partial t \partial z^2} + \frac{c^2\lambda}{2}\frac{\partial^2 p}{\partial z^2} .
\end{equation}
Equation (\ref{equazioneTempoPosizioneCongiuntaOSM})  can be written down in the alternative form
\begin{equation}\label{equazioneTempoPosizioneCongiuntaOSM2}
\Bigl(\frac{\partial p}{\partial t} + \frac{\lambda}{2}\Bigr)\Biggl[\Bigl( \frac{\partial p}{\partial t} +\frac{\partial p}{\partial s}\Bigr)^2+2\lambda \Bigl( \frac{\partial p}{\partial t} +\frac{\partial p}{\partial s}\Bigr)-c^2\frac{\partial^2 p}{\partial z^2}\Biggr]p +\frac{\lambda^2}{2}\Bigl( \frac{\partial p}{\partial t} -\frac{\partial p}{\partial s}\Bigr)p=0,
\end{equation}
where the telegraph operator appears in the square brackets.

By means of the change of variables $z_1 = t-s,z_2 = cs+z,z_3 = cs-z$ equation (\ref{equazioneTempoPosizioneCongiuntaOSM2}) can be further developed as
\begin{equation}\label{equazioneTempoPosizioneCongiuntaOSMTrasformata}
\frac{\partial^3 p}{\partial z_1\partial z_2 \partial z_3} = \frac{\lambda^2}{8c} \Bigl( \frac{\partial p}{\partial z_2}+\frac{\partial }{\partial z_3}\Bigr)p.
\end{equation}
By applying the transformation $w = \sqrt[3]{z_1z_2z_3}$ the operator on the left-hand-side of (\ref{equazioneTempoPosizioneCongiuntaOSMTrasformata}) converts into the third-order Bessel operator, in detail $\frac{1}{3^3w^2}\frac{\partial }{\partial w}\Bigl(w\frac{\partial}{\partial w}\bigl(w\frac{\partial}{\partial w}\bigr)\Bigr)$.

Unfortunately, for both versions of the orthogonal motion we are not able to provide explicitly the transition density of $(T_z,Z)$. In particular, it is interesting to observe that, even if, intuitively, one may think that the conditional probability $P\{Z(t)\in \dif z\,|\,T_z(t)=s\},\ z\in(-cs,cs),$ coincides with the probability $P\{\mathcal{T}(s)\in \dif z\}$, with $\mathcal{T}$ being a symmetric telegraph process with velocity $c$ (and a constant rate function depending on $\lambda$), this fascinating relationship does not exist. Therefore,
\begin{align}\label{equazioneMancatoProdotto}
p(t,s,z)\dif s\dif z &= P\{T_z(t)\in \dif s, Z(t)\in\dif z\} = P\{T_z(t)\in \dif s\}P\{Z(t)\in \dif z\,|\,T_z(t) =s\} \\
& \neq P\{T_z(t)\in \dif s\}P\{\mathcal{T}(s)\in \dif z\} = g(t,s) l(s,z) \dif s \dif z
\end{align}
and this is because the time horizon $t$ cannot be negleted when considering the time reached at time $s$. In detail, for an OSM we obtain that the densitiy of the first member of (\ref{equazioneMancatoProdotto}) satisfies equation (\ref{equazioneTempoPosizioneCongiuntaOSMTrasformata}), while the last member satisfies a slightly different equation. This difference is equal to $-\lambda\frac{\partial g}{\partial t}\frac{\partial l}{\partial s}$, with $g$ satisfying (\ref{equazioneLeggeTz}) and $l$ satisfying the telegraph equation with rate $\lambda/2$.
\\

The following remarks contain the distribution of the joint vector $(T_z, Z)$ over the edges of its support. In the case of an OUM we are able to derive explicit results when $\lambda(t) = \lambda>0 \forall \ t$.

\begin{remark}
Let us consider the OUM. We study the particular case where $Z(t)=cT_z(t)$, meaning that the vector process $(X,Y,Z)$ never moved with direction $d_5 = (0,0,-1)$ (the case $Z(t)=-cT_z(t)$ is equivalent). In this case we can assume an integrable rate function $\lambda$ such that $\Lambda(t)=\int_0^t \lambda(s)\dif s<\infty, t>0$.
\\Let $p(t,s)\dif s = P\{T_z(t)\in \dif s, Z(t) = cs\}$, $f_0 (t,s)\dif s =P\big\{T_z(t)\in \dif s, Z(t) = cs, D(t) \in \{d_0,d_1,d_3,d_4\}\bigl\}$ and  $f_1 (t,s)\dif s =P\big\{T_z(t)\in \dif s, Z(t) = cs, D(t) = d_2 \bigl\}$, , with $s\in (0,t)$, then $p = f_0+f_1$. By proceeding as we showed above, we can check that the functions $f_i$ are such that 
$$\begin{cases}
\frac{\partial f_1}{\partial t} = -\frac{\partial f_1}{\partial s }+\frac{\lambda(t)}{6}(f_0-5f_1),\\ \frac{\partial f_0}{\partial t} = \frac{\lambda(t)}{3}\,(2f_1-f_0),
\end{cases}$$
 and by means of the change $p = f_0+f_1, w = f_0-f_1$ we obtain the second-order differential problem in $p$ only,
\begin{equation}\label{sistemaTzCongiuntoZ=cTz}
\begin{cases}
\frac{\partial^2 p}{\partial t^2}+\frac{\partial^2 p}{\partial t \partial s}+ \frac{7\lambda(t)}{6}\frac{\partial p}{\partial t}+\frac{\lambda(t)}{3}\frac{\partial p}{\partial s }+\frac{1}{6}\bigl(\lambda(t)^2 +\lambda'(t)\bigr)p = 0, \\
p\ge0,\ \ \int_0^t p(t,s)\dif s= \frac{5}{6}e^{-\frac{\Lambda(t)}{6}}-\frac{2}{3}e^{-\frac{\Lambda(t) }{3}} -\frac{1}{6}e^{-\frac{5\Lambda(t)}{6}}.
\end{cases}
\end{equation}
The second boundary condition follows by observing that 
\begin{align*}
\int_0^t &p(t,s)\dif s = P\{Z(t) = cT_z(t) \} -P\{T_z(t) = t, Z(t) = ct \}-P\{T_z(t) = 0, Z(t) = 0 \} \\
&=P\{Z(t) = cT_z(t) \} - P\big\{D(s) =d_2\text{ for }  s\in [0,t]\big\} - P\big\{D(s)\in \{d_0,d_1,d_3,d_4\} \text{ for }  s\in [0,t]\big\}.
\end{align*}
and $P(Z(t) = cT_z(t)) = P\big\{D(s)\not = d_5 \text{ for }  s\in [0,t]\big\}=\frac{5}{6}e^{-\frac{\Lambda(t)}{6}}$. Then, by using the change $p(t,s)=5/6\,e^{-\frac{\Lambda(t)}{6}}q(t,s)$, we have that $q(t,s)\dif s = P\{T_z(t) \in\dif s\,|\, Z(t)=cT_z(t) \}$ and system (\ref{sistemaTzCongiuntoZ=cTz}) transforms into
\begin{equation}\label{sistemaTzCondizionatoZ=cTz}
\begin{cases}
\frac{\partial^2 q}{\partial t^2}+\frac{\partial^2 q}{\partial t \partial s}+ \frac{5\lambda(t)}{6}\frac{\partial p}{\partial t}+\frac{\lambda(t)}{6}\frac{\partial p}{\partial s }= 0, \\
q\ge0,\ \ \int_0^t q(t,s)\dif s= 1-\frac{4}{5}e^{-\frac{\Lambda(t)}{6}} -\frac{1}{5}e^{-\frac{2\Lambda(t)}{3}}.
\end{cases}
\end{equation}
By keeping in mind the arguments applied in the proof of Theorem \ref{teoremaTz}, it is easy to see that the differential problem (\ref{sistemaTzCondizionatoZ=cTz}) is solved by the distribution of a one-dimensional telegraph process with velocities $c_1=1, c_2 = 0$, rate functions $\lambda_1(t) = 2\lambda(t)/3 ,\ \lambda_2(t) = \lambda(t)/6, t>0,$ and initial velocity $V(0) = 1$ with probability $1/5$ and $V(0) = 0$ with probability $4/5$. The distribution of the starting speed can be explained by considering that, conditioned on $Z(t) = cT_z(t)$, the particle chooses among five directions and only one concerns the $z$-axis ($d_2$).
\\In the case of a constant rate, the interested reader can compute the explicit distribution by proceeding as we showed for probability (\ref{formaEsplicitaTempoTz}). By using the explicit form, it is easy to see that
$$P\{T_z(t)\in\dif s, Z(t)=cs\}\not = P\{T_z(t)\dif s\} P\{\mathcal{T}(s) = cs\}$$
with $\mathcal{T}$ being a symmetric telegraph process with velocity $c$ (and a constant rate function depending on $\lambda$).
\\

We point out that in the case of an OSM the reader can proceed as above, but the probability $ P\{Z(t) = cT_z(t) \}$ has a complicated form. In particular, the evaluation of this probability poses serious combinatorial problems. For example, if at time $t=0$ the initial direction is $d_2$ (upward motion), $N(t) = n$ and the number of vertical displacements (excluding the first one, which occurs at time $t=0$) is $k\ge1 $, we have two types of sequences (let $\hat d_0$ denotes a speed in the set of the horizontal directions $\{d_0,d_1,d_3,d_4\}$) which both starts with $\hat d_0$ since after a horizontal step follows a vertical movement with probability 1,
\begin{itemize}
\item $(d_2)\ \hat d_0, \dots, d_2,\hat d_0,\dots, \hat d_0,d_2$, where the last displacement is performed with direction $d_2$,
\item $(d_2)\ \hat d_0,\dots, d_2,\hat d_0, \dots, \hat d_0$, where the last displacement is performed with direction $\hat d_0$.
\end{itemize}
The number of possible sequences is respctively $\binom{n-k-1}{k-1}$ and $\binom{n-k-1}{k}$. The probability of these types of runs (by also keeping in mind the starting speed $d_2$) is equal to, $n\in \mathbb{N}$ and integer $0\le k\le n/2$,
\begin{equation}\label{probCombinatoria2}
 P\{Z(t) = cT_z(t),\, N_2(t) = k+1\,|\,N(t) = n, D(0)= d_2 \} = \frac{1}{6}\Biggl[\frac{1}{2^n}\binom{n-k-1}{k-1} + \frac{1}{2^{n-1}}\binom{n-k-1}{k} \Biggr],
\end{equation}
where $N_2(t)$ denotes the number of displacements with direction $d_2$ in the time interval $[0,t]$ (since we are assuming $D(0) = d_2$ it is clear that $N_2(t)\ge1\ a.s.$).
\\In the calculation of (\ref{probCombinatoria2}) one must consider that switches $\hat d_0 \longrightarrow d_2$ occurs with probability $1/2$ ($k$ times in both case), from $d_2 \longrightarrow \hat d_0$ with probability $1$ ($k-1$ times in the first case and $k$ in the second one) and $\hat d_0 \longrightarrow\hat d_0$ with probability $1/2$ ($n-2k$ times in the first case and $n-2k-1$ in the second one).
\\

By means of similar arguments, one can show that, if at time $t=0$ the initial direction is $\hat d_0$ (meaning a horizontal direction), then the following probability holds for $n\in\mathbb{N}$ and integer $1\le k\le n/2+1$ (by considering $\binom{n}{k} = 0$ if $n<k$), 
\begin{align}
 P&\{Z(t) = cT_z(t),\, N_2(t) = k\,|\,N(t) = n, D(0)= d_0 \} \nonumber\\
& = \frac{2}{3}\Biggl[\frac{1}{2}\frac{1}{2^n}\binom{n-k-1}{k-1} + \frac{1}{2}\frac{1}{2^{n-1}}\binom{n-k-1}{k}  +\frac{1}{4}\frac{1}{2^{n-1}}\binom{n-k-1}{k-2} + \frac{1}{4}\frac{1}{2^{n-2}}\binom{n-k-1}{k-1} \Biggr]\label{probCombinatoria0}\\
& = \frac{1}{3}\Biggl[\frac{1}{2^{n-1}}\binom{n-k}{k} + \frac{1}{2^{n}}\binom{n-k}{k-1} \Biggr].\nonumber
\end{align}
The four terms in (\ref{probCombinatoria0}) concern the following four possible sequences:
\begin{itemize}
\item starting with $\hat d_0$: $(\hat d_0)\, \hat d_0, \dots, d_2,\hat d_0,\dots, \hat d_0,d_2$ and $(\hat d_0)\, \hat d_0,\dots, d_2,\hat d_0, \dots, \hat d_0$, 
\item starting with $d_2$: $(\hat d_0)\, \hat d_2, \dots, d_2,\hat d_0,\dots, \hat d_0,d_2$ and $(\hat d_0)\,\hat d_2,\dots, d_2,\hat d_0, \dots, \hat d_0$.\hfill$\diamond$
\end{itemize}
\end{remark}

\begin{remark}
In the case of an OUM if $T_z(t) = t$ then $Z(t)\in [-ct,ct]\ a.s.$ and we can show that its conditional probability is equal to the distribution of a symmetric one-dimensional telegraph process.
Let us consider $p(t,z)\dif z =P\{T_z(t) = t,Z(t) \in \dif z\} =\sum_{i\in \{2,5\}} P\{T_z(t) = t, Z(t)\in \dif z,D(t) = d_i\} =  \sum_{i\in \{2,5\}} f_i(t,z)\dif z$, then $f_2,f_5$ are such that $\frac{\partial f_2}{\partial t} = -c\frac{\partial f_2}{\partial z} +\lambda(t)(f_5-5f_2)/6,\ \frac{\partial f_5}{\partial t} = c\frac{\partial f_5}{\partial z} +\lambda(t)(f_5-5f_2)/6$. By means of the change $p = f_2+f_5,w = f_2-f_5$ we obtain that $p$ satisfies
\begin{equation}\label{sistemaTzCondizionatoTz=t}
\begin{cases}
\frac{\partial^2 p}{\partial t^2}+\frac{5\lambda(t)}{3}\frac{\partial p}{\partial t }+ \frac{2}{3}\bigl(\lambda(t)^2+\lambda'(t)\bigr)p = c^2\frac{\partial^2 p}{\partial z^2 }, \\
p\ge0,\ \ \int_{-ct}^{ct} p(t,z)\dif z = P\{T_z(t) = t \} -P\big\{T_z(t) = t, Z(t) \in\{-ct, ct\} \big\} = \frac{1}{3}e^{-\frac{2\Lambda(t)}{3}}-\frac{1}{3}e^{-\frac{5\Lambda(t)}{6}}.
\end{cases}
\end{equation}
and with the transformation $p(t,z) = \frac{1}{3}e^{-2\Lambda(t)/3}\,
q(t,z)$ we obtain that $q(t,z)\dif z = P\{Z(t)\in \dif z\,|\,T_z(t) = t\}$ and by studying its governing differential system we obtain that it coincides with the absolutely continuous component of a symmetric telegraph process with rate function $\lambda(t)/6 \ \forall \ t$ and velocities $c$.
\hfill$\diamond$
\end{remark}

\subsection{Joint distribution of the vector process $(T_x, T_y, T_z)$}

The next theorem concerns the distribution of the vector process $(T_x,T_y,T_z)$, which satisfies $T_x(t)+ T_y(t)+T_z(t) = t, t\ge0$, and thus we only need to study the distribution of a couple.
\begin{theorem}\label{teoremaTriplettaTempi}
Let  $(T_x,T_y,T_z) = \big\{\bigl(T_x(t),T_y(t),T_z(t)\bigr)\big\}_{t\ge0}$ be the stochastic process describing the time spent on each coordinate axes by the process $(X,Y,Z) = \big\{\bigl(X(t),Y(t),Z(t)\bigr)\big\}_{t\ge0}$, an orthogonal random motion in $\mathbb{R}^3$ with rate function $\lambda\in C^2\bigl((0,\infty),(0,\infty)\bigl)$. Let $p(t,s,r)\dif s\dif r = P\{T_x(t)\in \dif s, T_y(t)\in \dif r,T_x(t)+T_y(t)+T_z(t) = t\}$, with $s,r>0, s+r<t$, then
\begin{equation}\label{leggeTriplettaTempi}
p(t,s,r)=\frac{3\sqrt{3}\,c^2}{2}q\Bigl(t,\frac{c}{2}(3s-t),\frac{\sqrt{3}c}{2}(s+2r-t)\Bigr),
\end{equation}
where
\item[$(i)$] if $(X,Y,Z)$ is an OSM, $q$ is the transition density of a symmetrically deviating planar random motion with directions (\ref{velocitaMotoPianoLO}) and rate function $\lambda$;
\item[$(ii)$] if $(X,Y,Z)$ is an OUM, $q$ is the transition density of a uniform planar random motion with directions (\ref{velocitaMotoPianoLO}) and rate function $\lambda$.
\end{theorem}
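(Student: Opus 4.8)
The plan is to mimic the strategy behind Theorem~\ref{teoremaDistribuzioneFaccia}, exploiting that the triplet $(T_x,T_y,T_z)$ is itself a three–direction motion confined to a plane. Since $T_x(t)+T_y(t)+T_z(t)=t$ almost surely, it suffices to track the pair $(s,r)=(T_x,T_y)$. The first observation is that each coordinate increases at unit rate exactly when the particle moves parallel to the corresponding axis: the pair $(s,r)$ advances with velocity $(1,0)$ while $(X,Y,Z)$ moves along the $x$–axis (directions $d_0,d_3$), with velocity $(0,1)$ while it moves along the $y$–axis ($d_1,d_4$), and stays frozen, with velocity $(0,0)$, while it moves along the $z$–axis ($d_2,d_5$), the elapsed time $t$ advancing in all three regimes.

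First I would read off the switching law induced on the three axis–regimes. For an OSM the orthogonal directions to $d_0$ (or $d_3$) are $d_1,d_2,d_4,d_5$, so the particle always leaves the $x$–axis, landing on the $y$– or the $z$–axis with probability $1/2$ each; the same holds from every axis, so the axis–process is a \emph{symmetrically deviating} motion on three states at the full rate $\lambda$. For an OUM the six equiprobable choices split as two keeping the current axis and $2+2$ moving to each of the other two, so each axis is selected with probability $1/3$ and the axis–process is a \emph{uniform} motion on three states, again at rate $\lambda$. In both cases the initial axis is uniform on the three (the six starting directions project two onto each axis). I would emphasise that, \emph{unlike} the face computation of Theorem~\ref{teoremaDistribuzioneFaccia}, where conditioning on $F_{ct}^+$ halved the effective rate to $\lambda/2$, here no conditioning is present and the full rate $\lambda$ survives, which is precisely the rate appearing in the statement.

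Next I would write the forward system for $f_i(t,s,r)\dif s\dif r=P\{T_x(t)\in\dif s,\,T_y(t)\in\dif r,\ \text{current axis}=i\}$, $i=0,1,2$, with $p=f_0+f_1+f_2$. In the OSM case it reads $\partial_t f_0=-\partial_s f_0+\tfrac{\lambda}{2}(f_1+f_2-2f_0)$, $\partial_t f_1=-\partial_r f_1+\tfrac{\lambda}{2}(f_0+f_2-2f_1)$, $\partial_t f_2=\tfrac{\lambda}{2}(f_0+f_1-2f_2)$, and likewise with $\lambda/3$ in place of $\lambda/2$ for the OUM. I would then apply the scaled form of the transformation (\ref{trasformazioneFaccia}), namely $u=\tfrac{c}{2}(3s-t)$ and $v=\tfrac{\sqrt3 c}{2}(s+2r-t)$, which is exactly (\ref{trasformazioneFaccia}) evaluated at $(cT_x,cT_y,cT_z)$ on the deterministic plane $cT_x+cT_y+cT_z=ct$. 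The elegant mechanism that makes the reduction work is that, since the change of variables depends on $t$, the convective derivative $\partial_t+\dot s\,\partial_s+\dot r\,\partial_r$ along each regime transforms into $\partial_t+\dot u\,\partial_u+\dot v\,\partial_v$ with $(\dot u,\dot v)$ equal to $v_0,v_1,v_2$ respectively; in particular the regime frozen in the $(s,r)$–plane (the $z$–axis) acquires the genuine velocity $v_2=(-c/2,-\sqrt3 c/2)$ in the $(u,v)$–plane. Hence the $f_i$–system turns into the system governing a planar motion with directions (\ref{velocitaMotoPianoLO}): the symmetrically deviating one of rate $\lambda$ for the OSM (compare (\ref{sistemaDifferenzialeMotoPianoUniformeLO}) and Remark~\ref{motoPianoTreDirezioniSD}) and the uniform one of rate $\lambda$ for the OUM, so that $q$ is its transition density.

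Finally, the Jacobian of $(s,r)\mapsto(u,v)$ has determinant $\tfrac{3c}{2}\cdot\sqrt3\,c=\tfrac{3\sqrt3 c^2}{2}$, which yields the prefactor in (\ref{leggeTriplettaTempi}); crucially no additional probability factor appears, precisely because the axis–process is unconditional. To promote this identity of densities to a complete proof I would match the boundary data: the interior $\{s,r>0,\ s+r<t\}$ corresponds to all three axes being visited, i.e.\ to the absolutely continuous component of the planar motion, while the relative boundary $\{T_i=0\}$ corresponds to its singular parts, carrying the masses already determined in Section~3 and in Theorem~\ref{teoremaMotoSuPianoXY}. I expect the main obstacle to be the careful bookkeeping of the switching rates so as to land on rate $\lambda$ rather than $\lambda/2$, together with verifying that the interior mass (for the OSM, $(1-e^{-\Lambda(t)/2})^2$) coincides with the absolutely continuous mass of the corresponding planar motion; these are routine once the rate identification is fixed, but must be checked to exclude a spurious normalising constant.
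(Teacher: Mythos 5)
Your proposal is correct and follows essentially the same route as the paper: the paper's proof likewise views $(T_x,T_y)$ as a three-velocity planar motion (with directions $(1,0),(0,1),(0,0)$) and applies exactly your time-dependent change of variables — equation (\ref{trasformazioneTempiAleatori}) — to convert its generator system into that of the planar motion with directions (\ref{velocitaMotoPianoLO}). Your additional observations (the rate bookkeeping showing why no conditioning factor arises and the full rate $\lambda$ survives, unlike in Theorem \ref{teoremaDistribuzioneFaccia}, and the Jacobian $\tfrac{3\sqrt{3}c^2}{2}$) are precisely the details the paper leaves implicit.
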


\begin{proof}
The process $(T_x,T_y)$ moves with unitary velocity along the directions $e_0 = (0,0), e_1 = (1,0), e_2 = (0,1)$. The proof works along the same lines of Subsection \ref{sottosezioneDistribuzioneFaccia} in order to prove Theorem \ref{teoremaDistribuzioneFaccia}. The system governing the joint distributions of the process and the current direction can be transformed into the corresponding system of a planar random motion with directions (\ref{velocitaMotoPianoLO}), that in the case of an OUM is given in (\ref{sistemaDifferenzialeMotoPianoUniformeLO}), by appling the transformation
\begin{equation}\label{trasformazioneTempiAleatori}
\begin{cases}
U(t) = \frac{c}{2}\bigl(3T_x(t)-t\bigr),\\
V(t) = \frac{\sqrt{3}c}{2}\bigl(T_x(t)+2T_y(t)-t\bigr),\\
\end{cases}
\begin{cases}
T_x(t) = \frac{1}{3c}\bigl(ct+2U(t)\bigr),\\
T_y(t)= \frac{1}{3c}\bigl(ct-U(t)+\sqrt{3}V(t)\bigr).\\
\end{cases}
\end{equation}
\end{proof}
Clearly, in the case that the rate function is such that $\Lambda(t) = \int_0^t\lambda(s)\dif s<\infty, t>0$, then also the distribution on the border of the support $ \{(s,r)\in \mathbb{R}^2\,:\, r,s\ge0,r+s\le t\}$ can be suitably obtained. Furthermore, if $\lambda(t)=\lambda >0 \ \forall \ t$, then the explicit form of the transition density $q$ in (\ref{leggeTriplettaTempi}) can be calculated from either (\ref{distribuzioneMotoPianoUniformeLO}) or (\ref{distribuzioneIntegraleMotoPianoUniformeLO}).

\section{Orthogonal motions in $\mathbb{R}^3$: absolutely continuous component}

Let $t\ge0$. In this section we examine some different methods to deal with the distribution of the random vector process $\bigl(X(t),Y(t),Z(t)\bigr)$ inside of the octahedron $S_{ct}$ in (\ref{ottaedro}). This task is by far more complicated than the study of planar random motions, since the third dimension (and thus the six possible orthogonal directions) implies the involvement of sixth-order equations. For $(x,y,z)\in \mathring{S_{ct}}$, put
$$ p(t,x,y,z)\dif x \dif y \dif z = P\big\{ X(t)\in \dif x, Y(t)\in \dif y, Z(t)\in \dif z\big\}. $$

\begin{theorem}
Let $(X,Y,Z) = \big\{\bigl(X(t),Y(t),Z(t)\bigr)\big\}_{t\ge0}$ be a standard orthogonal random motion in $\mathbb{R}^3$ with rate function $\lambda(t) = \lambda>0, \ \forall \ t.$ The transition density $p$ of  the random vector $(X,Y,Z)$ satisfies the sixth-order equation
\begin{align}\label{equazioneDifferenzialeSestoOrdine}
\Biggl[ \Bigl(&\frac{\partial}{\partial t} +\lambda\Bigr)^6 -\frac{3\lambda^2}{4}\Bigl(\frac{\partial}{\partial t} +\lambda\Bigr)^4 -\frac{\lambda^3}{4}\Bigl(\frac{\partial}{\partial t} +\lambda\Bigr)^3 \Biggr] p= c^6\frac{\partial^6 p}{\partial x^2\partial y^2\partial z^2}\\
& - c^4\Bigl(\frac{\partial}{\partial t} +\lambda\Bigr)^2\Bigl(\frac{\partial^4 }{\partial x^2\partial y^2}+\frac{\partial^4 }{\partial x^2\partial z^2} +\frac{\partial^4 }{\partial y^2\partial z^2}\Bigr) p+c^2\Biggl[\Bigl(\frac{\partial}{\partial t} +\lambda\Bigr)^4 -\frac{\lambda^2}{4}\Bigl(\frac{\partial}{\partial t} +\lambda\Bigr)^2 \Biggr] \Delta p,\nonumber
\end{align}
where $\Delta = \frac{\partial^2 }{\partial x^2}+\frac{\partial^2 }{\partial y^2}+\frac{\partial^2 }{\partial z^2}$ is the Laplace operator.
\end{theorem}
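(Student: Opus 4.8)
The plan is to derive a first-order linear system for the six directional densities, reduce it to three coupled telegraph-type equations by exploiting the symmetry between a direction and its reflection, and finally eliminate the three components using the commutativity of the resulting differential operators.

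First I would introduce, for $i=0,\dots,5$, the densities $f_i(t,x,y,z)\dif x\dif y\dif z = P\{X(t)\in\dif x, Y(t)\in\dif y, Z(t)\in\dif z, D(t)=d_i\}$, so that $p=\sum_{i=0}^5 f_i$. For the OSM, from direction $d_j$ the particle switches uniformly to the four directions orthogonal to $d_j$; equivalently, $f_0$ (direction $d_0$, the positive $x$-axis) is depleted at rate $\lambda$ and receives inflow at rate $\lambda/4$ from each of $f_1,f_2,f_4,f_5$, and analogously for the others. Writing the infinitesimal balance as in (\ref{sistemaNonDifferenzialeInizialeSpigolo}) gives
\begin{equation*}
\begin{cases}
\frac{\partial f_0}{\partial t} = -c\frac{\partial f_0}{\partial x} - \lambda f_0 + \frac{\lambda}{4}(f_1+f_2+f_4+f_5),\\
\frac{\partial f_3}{\partial t} = c\frac{\partial f_3}{\partial x} - \lambda f_3 + \frac{\lambda}{4}(f_1+f_2+f_4+f_5),
\end{cases}
\end{equation*}
together with the four analogous equations for $f_1,f_4$ (transport in $y$, inflow from $f_0,f_2,f_3,f_5$) and $f_2,f_5$ (transport in $z$, inflow from $f_0,f_1,f_3,f_4$).

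Next I would pass to the symmetric and antisymmetric combinations along each axis, $p_x=f_0+f_3$, $w_x=f_0-f_3$, and similarly $p_y,w_y,p_z,w_z$, so that $p=p_x+p_y+p_z$. Summing and subtracting the two equations for each axis, the inflow (identical for a direction and its reflection) cancels in the antisymmetric part and yields $(\partial_t+\lambda)w_x=-c\partial_x p_x$, while the symmetric part reads $(\partial_t+\lambda)p_x=-c\partial_x w_x+\frac{\lambda}{2}(p_y+p_z)$. Applying $(\partial_t+\lambda)$ to the latter and substituting the former eliminates $w_x$ and produces, for each axis, $L_x p_x=\frac{\lambda}{2}(\partial_t+\lambda)(p_y+p_z)$ with $L_x=(\partial_t+\lambda)^2-c^2\partial_x^2$ (and cyclically). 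Since $p_y+p_z=p-p_x$, each relation can be rewritten with a common right-hand side as $M_x p_x=\frac{\lambda}{2}(\partial_t+\lambda)p$, where $M_x=(\partial_t+\lambda)^2+\frac{\lambda}{2}(\partial_t+\lambda)-c^2\partial_x^2$, and likewise $M_y p_y=M_z p_z=\frac{\lambda}{2}(\partial_t+\lambda)p$.

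Finally, because $M_x,M_y,M_z$ are polynomials in the commuting operators $\partial_t,\partial_x,\partial_y,\partial_z$, I would apply $M_yM_z$, $M_xM_z$, $M_xM_y$ to the three identities respectively and add them; using $p_x+p_y+p_z=p$ this gives the single closed equation
\begin{equation*}
M_xM_yM_z\, p = \frac{\lambda}{2}(\partial_t+\lambda)\bigl(M_yM_z + M_xM_z + M_xM_y\bigr)p.
\end{equation*}
The remaining, and only genuinely laborious, step is to expand this with $M_i=A-c^2\partial_i^2$, $A=(\partial_t+\lambda)^2+\frac{\lambda}{2}(\partial_t+\lambda)$: writing everything in terms of the elementary symmetric functions of $c^2\partial_x^2,c^2\partial_y^2,c^2\partial_z^2$ (whose first symmetric function is $c^2\Delta$, second gives the mixed fourth-order terms, and third gives $c^6\partial_x^2\partial_y^2\partial_z^2$) and collecting powers of $(\partial_t+\lambda)$ reproduces exactly (\ref{equazioneDifferenzialeSestoOrdine}). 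The main obstacle is thus computational—tracking the coefficients through the sixth-order expansion—rather than conceptual; the commutativity of the operators is precisely what makes the elimination clean and yields the product-of-D'Alembert-operators structure anticipated in the abstract.
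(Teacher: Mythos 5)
Your proposal is correct, and it reaches the stated equation by a genuinely cleaner elimination than the paper's. Both arguments start identically: the same six-density transport system and the same per-axis symmetric/antisymmetric reduction (your $(\partial_t+\lambda)w_x=-c\partial_x p_x$, $(\partial_t+\lambda)p_x=-c\partial_x w_x+\tfrac{\lambda}{2}(p_y+p_z)$ is exactly the paper's second-order system for $g_0,g_1,g_2$, written in operator form). The paper then introduces a second transformation $p=g_0+g_1+g_2$, $w_1=g_0+g_1-g_2$, $w_2=g_0-g_1+g_2$, and eliminates $w_1,w_2$ through a long chain of repeated $t$-differentiations and back-substitutions (a fourth-order intermediate equation, then several auxiliary identities). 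You avoid all of that: the key observation that $p_y+p_z=p-p_x$ lets you write $M_x p_x=M_y p_y=M_z p_z=\tfrac{\lambda}{2}(\partial_t+\lambda)p$ with a \emph{common} right-hand side, after which applying $M_yM_z$, $M_xM_z$, $M_xM_y$ and summing kills all three unknowns in one stroke by commutativity. I checked the resulting expansion: with $A=(\partial_t+\lambda)^2+\tfrac{\lambda}{2}(\partial_t+\lambda)$ and $e_1,e_2,e_3$ the elementary symmetric functions of $c^2\partial_x^2,c^2\partial_y^2,c^2\partial_z^2$, the identity $A^3-\tfrac{3\lambda}{2}(\partial_t+\lambda)A^2=(A^2-\lambda(\partial_t+\lambda)A)e_1-(A-\tfrac{\lambda}{2}(\partial_t+\lambda))e_2+e_3$ collapses, via $A=(\partial_t+\lambda)(\partial_t+\tfrac{3\lambda}{2})$, to exactly the coefficients $(\partial_t+\lambda)^6-\tfrac{3\lambda^2}{4}(\partial_t+\lambda)^4-\tfrac{\lambda^3}{4}(\partial_t+\lambda)^3$ on the left and $(\partial_t+\lambda)^4-\tfrac{\lambda^2}{4}(\partial_t+\lambda)^2$, $-(\partial_t+\lambda)^2$, $1$ on $c^2\Delta$, $e_2$, $e_3$ respectively, so the equation is reproduced exactly. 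What each route buys: yours makes the factored operator structure transparent from the start (the product form the paper only exhibits afterwards, in the remark following the theorem, via $p=e^{-\lambda t}q$) and replaces pages of manipulation by symmetric-function algebra; the paper's is more pedestrian but requires no insight beyond patience. One cosmetic caveat: your $M_i$ are telegraph-type rather than D'Alembert operators --- the pure D'Alembert factorization only emerges after the exponential substitution --- but this does not affect the argument.
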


\begin{proof}
In order to derive equation (\ref{equazioneDifferenzialeSestoOrdine}) we proceed as follows.
Let $f_i(t,x,y,z)\dif x \dif y\dif z = P\{ X(t)\in\dif x, Y(t)\in \dif y,Z(t)\in \dif z,D(t)=d_i\},\ i=0,1,2,3,4,5$. Now, $p$ is the sum of the probability densities $f_i$ and which satisfy the following differential system
\begin{equation}\label{sistemaInizialeMotoSuFaccia}
\begin{cases}
\frac{\partial f_0}{\partial t} = -c\frac{\partial f_0}{\partial x}+\frac{\lambda}{4}(f_1+f_2+f_4+f_5-4f_0),\ \ \ \frac{\partial f_3}{\partial t} = c\frac{\partial f_3}{\partial x}+\frac{\lambda}{4}(f_1+f_2+f_4+f_5-4f_3),\\
\frac{\partial f_1}{\partial t} = -c\frac{\partial f_1}{\partial y}+\frac{\lambda}{4}(f_0+f_2+f_3+f_5-4f_1),\ \ \ \frac{\partial f_4}{\partial t} = c\frac{\partial f_4}{\partial y}+\frac{\lambda}{4}(f_0+f_2+f_3+f_5-4f_4),\\
\frac{\partial f_2}{\partial t} =-c\frac{\partial f_2}{\partial z}+\frac{\lambda}{4}(f_0+f_1+f_3+f_4-4f_2),\ \ \ \frac{\partial f_5}{\partial t} =c\frac{\partial f_5}{\partial z}+\frac{\lambda}{4}(f_0+f_1+f_3+f_4-4f_5).
\end{cases}
\end{equation}
Note that the left-hand equations of (\ref{sistemaInizialeMotoSuFaccia}) refer to the displacements with positive directions, while the right-hand ones correspond to the negatively-oriented movements. 
\\The pairwise coincidence of the sums of functions $f_i$ is due to the orthogonality and symmetry of motion. Now, by means of the transformation $g_0 = f_0+f_3, h_0=f_0-f_3,\ g_1 = f_1+f_4, h_1 = f_1-f_4,g_2=f_2+f_5,\ h_2=f_2-f_5$, system (\ref{sistemaInizialeMotoSuFaccia}) can be converted into a second-order differential system for the functions $g_0,g_1,g_2$ only,
\begin{equation}
\begin{cases}
\frac{\partial^2 g_0}{\partial t^2} = c^2\frac{\partial^2 g_0}{\partial x^2}-2\lambda\frac{\partial g_0}{\partial t} + \frac{\lambda^2}{2}(-2g_0+g_1+g_2)+ \frac{\lambda}{2}\frac{\partial }{\partial t}(g_1+g_2),\\
\frac{\partial^2 g_1}{\partial t^2} = c^2\frac{\partial^2 g_1}{\partial y^2}-2\lambda\frac{\partial g_1}{\partial t} + \frac{\lambda^2}{2}(g_0-2g_1+g_2)+ \frac{\lambda}{2}\frac{\partial }{\partial t}(g_0+g_2),\\
\frac{\partial^2 g_2}{\partial t^2} = c^2\frac{\partial^2 g_2}{\partial z^2}-2\lambda\frac{\partial g_2}{\partial t} + \frac{\lambda^2}{2}(g_0+g_1-2g_2)+ \frac{\lambda}{2}\frac{\partial }{\partial t}(g_0+g_1).
\end{cases}
\end{equation}
We apply now a second transformation, $p = g_0+g_1+g_2,\ w_1 = g_0+g_1-g_2,\ w_2 = g_0-g_1+g_2$, and some calculation lead to
\begin{equation}\label{sistemaPWEquazioneSetsoOrdine}
\begin{cases}
\frac{\partial^2 p}{\partial t^2} = \frac{c^2}{2}\Bigl(\frac{\partial^2 }{\partial x^2}+\frac{\partial^2 }{\partial y^2}\Bigr)p-\lambda\frac{\partial p}{\partial t} + \frac{c^2}{2}\Bigl(\frac{\partial^2 }{\partial x^2}(w_1+w_2)-\frac{\partial^2 w_2}{\partial y^2}-\frac{\partial^2 w_1}{\partial z^2}\Bigr),\\
\frac{\partial^2 w_1}{\partial t^2} = \frac{c^2}{2}\Bigl(\frac{\partial^2 }{\partial y^2}-\frac{\partial^2 }{\partial z^2}\Bigr)p+\Bigl(\frac{\lambda^2}{2}+\frac{\lambda}{2}\frac{\partial}{\partial t}\Bigr)p + \frac{c^2}{2}\Bigl(\frac{\partial^2 }{\partial x^2}(w_1+w_2)-\frac{\partial^2 w_2}{\partial y^2}+\frac{\partial^2 w_1}{\partial z^2}\Bigr) -\frac{5\lambda}{2}\frac{\partial w_1}{\partial t}-\frac{3\lambda^2}{2}w_1,\\
\frac{\partial^2 w_1}{\partial t^2} = -\frac{c^2}{2}\Bigl(\frac{\partial^2 }{\partial y^2}-\frac{\partial^2 }{\partial z^2}\Bigr)p+\Bigl(\frac{\lambda^2}{2}+\frac{\lambda}{2}\frac{\partial}{\partial t}\Bigr)p + \frac{c^2}{2}\Bigl(\frac{\partial^2 }{\partial x^2}(w_1+w_2)+\frac{\partial^2 w_2}{\partial y^2}-\frac{\partial^2 w_1}{\partial z^2}\Bigr) -\frac{5\lambda}{2}\frac{\partial w_2}{\partial t}-\frac{3\lambda^2}{2}w_2.
\end{cases}
\end{equation}
Now, by deriving twice with respect to $t$ the first equation of (\ref{sistemaPWEquazioneSetsoOrdine}) and by suitably using the other two equations, with some effort, we arrive at the following fourth-order differential equation
\begin{align}\label{equazionePWQuartoOrdine}
\frac{\partial^4 p}{\partial t^4} &= -\frac{c^4}{2}\Bigl(\frac{\partial^4 }{\partial x^2\partial y^2}+\frac{\partial^4 }{\partial x^2\partial z^2}+2\frac{\partial^4 }{\partial y^2\partial z^2}\Bigr)p + c^2\Bigl(\frac{\partial^2}{\partial t^2}+ \frac{3\lambda}{2}\frac{\partial}{\partial t}+\frac{\lambda^2}{2} \Bigr)\Delta p\nonumber\\ 
&\ \ \ -\Bigl(\frac{\partial^2}{\partial t^2}+\lambda\frac{\partial}{\partial t}\Bigr)\Bigl(\frac{5\lambda}{2}\frac{\partial}{\partial t}+\frac{3}{2}\lambda^2 \Bigr)p-\lambda \frac{\partial^3p}{\partial t^3}+\frac{c^4}{2}\Bigl(\frac{\partial^4 }{\partial y^2\partial z^2}(w_1+w_2)-\frac{\partial^4 w_1}{\partial x^2\partial y^2}-\frac{\partial^4 w_2}{\partial x^2\partial z^2}\Bigr),
\end{align}
where still the auxiliary functions $w_1,w_2$ appear.

From the second and third equations of (\ref{sistemaPWEquazioneSetsoOrdine}) we obtain 
\begin{align}\label{sistemaSoliWEquazioneSetsoOrdine}
&\frac{c^4}{2}\frac{\partial^4 }{\partial y^2\partial z^2} \frac{\partial^2}{\partial t^2}(w_1+w_2) = \frac{c^4}{2}\frac{\partial^4 }{\partial y^2\partial z^2}\Biggl[\Bigl(c^2\frac{\partial^2 }{\partial x^2}-\frac{5\lambda}{2}\frac{\partial }{\partial t}-\frac{3\lambda^2}{2}\Bigr)(w_1+w_2) + \Bigl(\lambda^2+\lambda\frac{\partial}{\partial t}\Bigr)p \Biggr],\\
&-\frac{c^4}{2}\frac{\partial^4 }{\partial x^2\partial y^2} \frac{\partial^2}{\partial t^2}w_1 = \frac{c^4}{2}\frac{\partial^4 }{\partial x^2\partial y^2}\Biggl[-\frac{c^2}{2}\Bigl(\frac{\partial^2 }{\partial y^2}-\frac{\partial^2 }{\partial z^2}\Bigr)p -  \frac{c^2}{2}\Bigl(\frac{\partial^2 }{\partial x^2}(w_1+w_2)-\frac{\partial^2 w_2}{\partial y^2} +\frac{\partial^2 w_1}{\partial z^2}\Bigr)+ \nonumber\\
&\hspace{3cm}+ \frac{5\lambda}{2}\frac{\partial w_1}{\partial t}+\frac{3\lambda^2}{2}w_1- \Bigl(\frac{\lambda^2}{2}+\frac{\lambda}{2}\frac{\partial}{\partial t}\Bigr)p \Biggr],\nonumber\\
&-\frac{c^4}{2}\frac{\partial^4 }{\partial x^2\partial z^2} \frac{\partial^2}{\partial t^2}w_2 = \frac{c^4}{2}\frac{\partial^4 }{\partial x^2\partial z^2}\Biggl[\frac{c^2}{2}\Bigl(\frac{\partial^2 }{\partial y^2}-\frac{\partial^2 }{\partial z^2}\Bigr)p -  \frac{c^2}{2}\Bigl(\frac{\partial^2 }{\partial x^2}(w_1+w_2)+\frac{\partial^2 w_2}{\partial y^2} -\frac{\partial^2 w_1}{\partial z^2}\Bigr) \nonumber\\
&\hspace{3cm}+ \frac{5\lambda}{2}\frac{\partial w_2}{\partial t}+\frac{3\lambda^2}{2}w_2- \Bigl(\frac{\lambda^2}{2}+\frac{\lambda}{2}\frac{\partial}{\partial t}\Bigr)p \Biggr].\nonumber
\end{align}
We now explain how to express the derivatives of $w_1$ and $w_2$ in terms of derivatives of $p$ only. By summing up equations in (\ref{sistemaSoliWEquazioneSetsoOrdine}) and subsequently deriving twice with respect to $t$, we obtain a differential equation involving $p, w_1$ and $w_2$. The terms concerning the functions $w_1$ and $w_2$ are the following ones
\begin{align}
&A_1 = -\frac{c^4}{2^2}\Bigl(5\frac{\partial}{\partial t} + 3\lambda\Bigr)\Bigl(\frac{\partial^4}{\partial y^2\partial z^2}(w_1+w_2) -\frac{\partial^4 w_1}{\partial x^2 \partial y^2}-\frac{\partial^4 w_2}{\partial x^2 \partial z^2}\Bigr)\label{primoTermineDaEsplicitare}\\
&A_2 = \frac{c^6}{2^2}\Bigl(\frac{\partial^6}{\partial x^2\partial y^2\partial z^2}(w_1+w_2) -\frac{\partial^6}{\partial x^4\partial z^2}(w_1+w_2)+ \frac{\partial^6 w_1}{\partial x^4\partial z^2} -\frac{\partial^6}{\partial x^4\partial y^2}(w_1+w_2) +\frac{\partial^ 6w_2}{\partial x^4\partial y^2}\Bigr).\label{secondoTermineDaEsplicitare}
\end{align}
The term $A_1$ in (\ref{primoTermineDaEsplicitare}) can be easily expressed in terms of derivatives of function $p$ only by means of equation (\ref{equazionePWQuartoOrdine}). On the other hand, the term $A_2$ in (\ref{secondoTermineDaEsplicitare}) can be written as
$$ A_2 = -\frac{c^6}{2^2}\Bigl(\frac{\partial^4}{\partial x^2\partial y^2}+\frac{\partial^4}{\partial x^2\partial z^2}\Bigr)\Bigl(\frac{\partial^2}{\partial x^2}(w_1+w_2) -\frac{\partial^2w_2}{\partial y^2}-\frac{\partial^2w_1}{\partial z^2}\Bigr) $$
which easily turns into an expression of $p$ only by using the first equation of system (\ref{sistemaPWEquazioneSetsoOrdine}).

Now, with the above observations at hand, with suitable manipulations, we obtain the useful relationship
\begin{align}\label{ultimoPassagioW}
\frac{c^4}{2}&\Bigl(\frac{\partial^6 }{\partial t^2\partial y^2\partial z^2}(w_1+w_2)-\frac{\partial^6 w_1}{\partial t^2\partial x^2\partial y^2}-\frac{\partial^6 w_2}{\partial t^2\partial x^2\partial z^2}\Bigr) \\
& = c^6\frac{\partial^6 p}{\partial x^2\partial y^2\partial z^2} -\frac{c^4}{2} \Bigl(\frac{\partial^2 }{\partial t^2} +4\lambda \frac{\partial }{\partial t}+2\lambda^2\Bigr) \Bigl(\frac{\partial^4 }{\partial x^2\partial y^2}+\frac{\partial^4 }{\partial x^2\partial z^2} +\frac{\partial^4 }{\partial y^2\partial z^2}\Bigr) p +  \frac{c^4}{2}\frac{\partial^6 p}{\partial t^2\partial y^2\partial z^2}\nonumber\\
&\ \ \ - \Bigl(\frac{5\lambda}{2}\frac{\partial}{\partial t}+\frac{3}{2}\lambda^2 \Bigr)\Biggl[ \frac{\partial^4 }{\partial t^4} + \lambda\frac{\partial^3 }{\partial t^3} -\frac{c^2}{2} \Bigl(2\frac{\partial^2 }{\partial t^2} +3\lambda \frac{\partial }{\partial t}+\lambda^2\Bigr) \Delta + \Bigl(\frac{\partial^2 }{\partial t^2} +\lambda \frac{\partial }{\partial t}\Bigr)\Bigl(\frac{5\lambda}{2}\frac{\partial}{\partial t}+\frac{3}{2}\lambda^2 \Bigr) \Biggr]p.\nonumber
\end{align}
Finally, by deriving (\ref{equazionePWQuartoOrdine}) twice with respect to $t$ and by using (\ref{ultimoPassagioW}) we achieve the sixth-order differential equation (\ref{equazioneDifferenzialeSestoOrdine}).
\end{proof}

Note that under the so called Kac's conditions, $\lambda, c\longrightarrow\infty$ such that $\lambda/c^2\longrightarrow 1$, equation (\ref{equazioneDifferenzialeSestoOrdine}) converges to the space heat equation $\frac{\partial p}{\partial t} = \frac{\Delta p}{3}$ and the OSM converges in distribution to a three-dimensional Brownian motion (this can be equivalently observed in the case of an OUM).

\begin{remark}
Equation (\ref{equazioneDifferenzialeSestoOrdine}) can be substantially simplified by means of the transformation $p(t,x,y,z) = e^{-\lambda t}q(t,x,y,z)$. We obtain that $q$ satisfies
\begin{align}\label{equazioneDifferenzialeSestoOrdineTrasformata}
\frac{3\lambda^2}{4}\frac{\partial^2 }{\partial t^2} \Bigl(\frac{\partial^2 }{\partial t^2} +\frac{\lambda}{3}\frac{\partial }{\partial t} - \frac{c^2}{3}\Delta\Bigr)q= \Bigl(\frac{\partial^2}{\partial t^2} -c^2\frac{\partial^2 }{\partial x^2}\Bigr)\Bigl(\frac{\partial^2}{\partial t^2} -c^2\frac{\partial^2 }{\partial y^2}\Bigr)\Bigl(\frac{\partial^2}{\partial t^2} -c^2\frac{\partial^2 }{\partial z^2}\Bigr)q.
\end{align}
where in the right-hand side the product of D'Alembert operators appears.
\\

It is of interest to compare the above results with those of the planar case. The equation governing the transition probability $p(t,x,y)$ of an orthogonal standard motion with constant rate function $\lambda(t) = \lambda>0, t>0$, satisfies the fourth-order differential equation
\begin{equation}\label{equazioneMotoOrtogonalePiano}
\Bigl(\frac{\partial }{\partial t}+\lambda^2\Bigr) \Biggl[ \frac{\partial^2 }{\partial t^2} +2\lambda \frac{\partial }{\partial t} -c^2 \Bigl(\frac{\partial^2 }{\partial x^2} +\frac{\partial^2 }{\partial y^2} \Bigr)\Biggr]p +c^4 \frac{\partial^4p}{\partial x^2\partial y^2}=0.
\end{equation}\label{equazioneMotoPianoTrasformazione}
By means of the exponential transformation $p(t,x,y) = e^{-\lambda t}q(t,x,y)$ equation (\ref{equazioneMotoOrtogonalePiano}) reduces to
\begin{equation}
 \Bigl(\frac{\partial^2}{\partial t^2} -c^2\frac{\partial^2 }{\partial x^2}\Bigr)\Bigl(\frac{\partial^2}{\partial t^2} -c^2\frac{\partial^2 }{\partial y^2}\Bigr)q = \lambda^2\frac{\partial^2 q}{\partial t^2}
\end{equation}
By comparing (\ref{equazioneMotoPianoTrasformazione}) with (\ref{equazioneDifferenzialeSestoOrdineTrasformata}) we note that the increase of dimension, or more precisely the assumption that the motion can move along two additional directions in the third dimension, implies a further D'Alembert operator (involving the coordinate of the third dimension, $z$) while the time derivatives involve a telegraph-type operator.\hfill$\diamond$
\end{remark}

The joint probability distribution of both the OSM and the OUM can be expressed by means of the following integral representation,
\begin{align}\label{motoOrtogonaleComponenteAssolutamenteContinua}
p(t,x,&y,z) \dif x\dif y\dif z= \int_{\frac{|x|}{c}}^{t-\frac{|y|+|z|}{c}} \int_{\frac{|y|}{c}}^{t-t_x - \frac{|z|}{c}} P\{T_x\in \dif t_x, T_y(t)\in \dif t_y\}\\
&\times P\{X(t)\in \dif x\,|\,T_x(t)=t_x\}P\{Y(t)\in \dif y\,|\, T_y(t)=t_y\}P\{Z(t)\in \dif z\,|\, T_z(t)=t-t_x-t_y\}.\nonumber
\end{align}
We point out that the first factor in the integral is treated in Theorem \ref{teoremaTriplettaTempi}. It is important to observe that we can split the joint probability of $X(t), Y(t),Z(t)$, conditioned over the three times $T_x(t), T_y(t)$ and $T_z(t)$ as showed above because the homogeneous Poisson process $N$ has independent waiting times.
Furthermore, from the symmetry of the orthogonal motion (both OSM and OUM), for $0\le s\le t$, $X(t)$, with respect to the measure $P\{\cdot\,|\,T_x(t) = s\}$, is equal in distribution to $Y(t)$ with respect to the measure $P\{\cdot\,|\, T_y(t)=s\}$ and similarly with $Z(t)$. Finally, these conditional distributions in (\ref{motoOrtogonaleComponenteAssolutamenteContinua}), in the case of an OUM, would immediately follow from the joint distribution governed by equation (\ref{equazioneTempoPosizioneCongiunta}) and probability (\ref{formaEsplicitaTempoTz}). Unfortunately, as previously explained, we have not been able to obtain an explicit form for the first probability.








\footnotesize{

}

\end{document}